\def\NZQ{\mathbb}               
\def\ZZ{{\NZQ Z}}
\def\RR{{\NZQ R}}
\def\frk{\mathfrak}               
\def\Phi{{\frk N}}
\def\ab{{\mathbf a}}
\def\wb{{\mathbf w}}
\def\xb{{\mathbf x}}
\def\Ib{{\mathbf I}}
\def\opn#1#2{\def#1{\operatorname{#2}}} 
\opn\ini{in} \opn\sgn{sgn} \opn\Gr{Gr} \opn\Im{Im}
\opn\gr{gr}
\def\Ac{{\mathcal A}}
\def\Mc{{\mathcal M}}
\def\Gc{{\mathcal G}}
\newtheorem{Theorem}{Theorem}[section]
\newtheorem{Lemma}[Theorem]{Lemma}
\newtheorem{Corollary}[Theorem]{Corollary}
\newtheorem{Proposition}[Theorem]{Proposition}
\theoremstyle{definition}
\newtheorem{Remark}[Theorem]{Remark}
\newtheorem{Example}[Theorem]{Example}
\newtheorem{Definition}[Theorem]{Definition}
\newtheorem{Question}[Theorem]{Question}
\let\epsilon\varepsilon
\let\phi=\varphi
\let\kappa=\varkappa
\opn\dis{dis}
\opn\height{height}
\opn\dist{dist}
\def\pnt{{\raise0.5mm\hbox{\large\bf.}}}
\opn\Lex{Lex}
\opn\conv{conv}
\begin{document}

\title{Quadratic Gr\"obner bases of block diagonal matching field ideals and toric degenerations of Grassmannians}

\author{Akihiro Higashitani and Hidefumi Ohsugi}

\address{Akihiro Higashitani,
Department of Pure and Applied Mathematics,
Graduate School of Information Science and Technology,
Osaka University,
Suita, Osaka 565-0871, Japan}
\email{higashitani@ist.osaka-u.ac.jp}

\address{Hidefumi Ohsugi,
	Department of Mathematical Sciences,
	School of Science, 
	Kwansei Gakuin University,
	Sanda, Hyogo 669-1337, Japan} 
\email{ohsugi@kwansei.ac.jp}

\subjclass[2010]{
Primary: 13F65; 
Secondary; 13P10, 14M15. 
}
\keywords{Matching fields, toric ideals, Gr\"obner bases, Grassmannians, toric degenerations, SAGBI bases.}

\begin{abstract}
In the present paper, we prove that the toric ideals of certain $s$-block diagonal matching fields 
have quadratic Gr\"obner bases. Thus, in particular, those are quadratically generated. 
By using this result, we provide a new family of toric degenerations of Grassmannians. 
\end{abstract}

\maketitle

\section{Introduction}

A toric degeneration of a given projective variety $X$ is a flat family of varieties whose central fiber is a toric variety $X_0$ 
and all of whose general fibers are isomorphic to $X$. The resulting toric variety $X_0$ has a rich information on the original variety $X$. 
Hence, providing a toric degeneration is a useful tool to analyze algebraic varieties by using toric geometry. 
In the present paper, we study a new family of toric degeneration of Grassmannians. 

Let $K$ be a field. We use the notation $\Gr(r,n)$ for the Grassmannian, 
which is the space of $r$-dimensional subspaces of the $n$-dimensional vector space $K^n$. 
The studies of toric degenerations of Grassmannians, flag varieties, Schubert varieties and Richardson varieties 
are an active research area in various branches of mathematics, 
such as algebraic geometry, algebraic topology, representation theory, commutative algebra, combinatorics, and so on. 
For the study of toric degenerations, see, e.g. \cite{BFFHL, BLMM, BMN, CHM, CM, MS, RW} and so on. 
There are many ways to construct toric degenerations for them. 
The main example of toric degenerations of $\Gr(r,n)$ is, so-called, the {\em Gelfand--Tsetlin degeneration} (see \cite[Section 14]{MillerSturmfels}). 
Moreover, the theory of Newton--Okounkov bodies can be applied to provide toric degenerations of varieties (see, e.g., \cite{HaradaKaveh} and the references therein). 
We can construct Newton--Okounkov bodies of Grassmannians from plabic graphs, which are certain bipartite graphs drawn in the disc, by a combinatorial manner (\cite{RW}). 
Furthermore, tropical geometry is also used for the study of toric degenerations of defining ideals in general (\cite{KavehManon}). 
Namely, the top-dimensional cones of tropicalizations of varieties are good candidates to give toric degenerations, see \cite[Lemma 1]{BMN} and also \cite{KavehManon}.
It was also prove in \cite{FR} that the tropical cones arising from matching fields are a subfamily of Stiefel tropical linear spaces. 

In addition, the theory of SAGBI bases can be also applied to provide toric degenerations. This is the tool we will use in the present paper. 
We refer the reader to \cite[Section 11]{Stu} for the introduction to the theory of SAGBI bases. 
For specifying the monomial order for SAGBI bases, we use {\em matching fields} in the sense of \cite{SZ}, 
which we will explain in Section~\ref{sec:prepare}. 
Note that the same approach of using matching fields to obtain toric degenerations of Schubert and Richardson varieties 
has been studied in \cite{CM_Sch} and \cite{BCM}, respectively. 

Let us review the previous known results from \cite{CM, MS} which are strongly related to our main theorems. 
In \cite{MS}, Mohammadi and Shaw discuss a necessary condition for matching fields to provide a toric degeneration of $\Gr(r,n)$ as follows: 
\begin{Theorem}[{See \cite[Theorems 1.2 and 1.3]{MS}}]
If the matching field $\Lambda$ of $\Gr(r,n)$ produces its toric degeneration, then $\Lambda$ is non-hexagonal. 
Moreover, the converse is also true for $r=3$ under the assumption that the matching field ideal is quadratically generated. 
\end{Theorem}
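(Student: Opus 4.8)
\emph{Outline of a proof.} The plan is to recast the statement ``$\Lambda$ produces a toric degeneration of $\Gr(r,n)$'' in the language of SAGBI bases and then to reduce the whole question to six columns. One may restrict attention to coherent matching fields --- those induced by a weight matrix --- since a matching field that produces a degeneration is automatically of this form; such a $\Lambda$ determines a weight vector $\wb_\Lambda$ on the Pl\"ucker variables $p_I$, the weight of $p_I$ being that of the unique leading term $\ini(\Delta_I)$ of the maximal minor $\Delta_I$ of a generic $r\times n$ matrix. Writing $G_{r,n}$ for the Pl\"ucker ideal and $J_\Lambda=\ker(p_I\mapsto\ini(\Delta_I))$ for the matching field ideal, a one-line subduction argument gives $\ini_{\wb_\Lambda}(G_{r,n})\subseteq J_\Lambda$ for free: for homogeneous $g\in G_{r,n}$, evaluate $g$ on the minors, which vanishes, and compare initial parts to see that $\ini_{\wb_\Lambda}(g)$ maps to $0$ under $p_I\mapsto\ini(\Delta_I)$. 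By the SAGBI analogue of Buchberger's criterion \cite[Section~11]{Stu}, equality $\ini_{\wb_\Lambda}(G_{r,n})=J_\Lambda$ holds if and only if the minors $\{\Delta_I\}$ form a SAGBI basis of the Pl\"ucker algebra, and this is precisely the condition that $\Lambda$ produces a toric degeneration. So the theorem becomes the assertion that this SAGBI property is governed by the hexagonal pattern.

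The structural device is a column-support (or localisation) observation: since $\ini(\Delta_I)$ uses exactly the $|I|$ columns of $I$, one variable per column, every S-element of two minors $\Delta_A,\Delta_B$ lives among the at most six columns of $A\cup B$, and every step of its subduction involves only minors $\Delta_J$ with $J$ contained in that set of columns; likewise the passage to the ``link'' (contraction) of a fixed set of columns is governed by the same bookkeeping. In particular, if $\{\Delta_I:I\subseteq[n]\}$ is a SAGBI basis then so is $\{\Delta_I:I\subseteq C\}$ for every $C\subseteq[n]$, and similarly after contracting a fixed set of columns. For the forward implication I would argue by contraposition: if $\Lambda$ is hexagonal, then by definition there is a bounded sub-configuration --- six columns, possibly after a fixed contraction --- on which $\Lambda$ induces the hexagonal pattern, and one checks in the finite base case $\Gr(3,6)$, where the definition of ``hexagonal'' is calibrated and which can be extracted from Speyer and Sturmfels' description of the tropical Grassmannian $\operatorname{Trop}(G_{3,6})$, that such a matching field does \emph{not} give a SAGBI basis (its weight vector lies in a maximal cone whose initial ideal is not a prime binomial ideal). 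By the localisation observation the full $\Gr(r,n)$ minors then fail to be a SAGBI basis, so $\Lambda$ produces no toric degeneration; note that this direction uses neither $r=3$ nor quadratic generation.

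For the converse, assume $r=3$, that $\Lambda$ is non-hexagonal, and that $J_\Lambda$ is generated in degree $2$. By the SAGBI criterion it suffices to subduct to zero the S-elements attached to a generating set of $J_\Lambda$, and quadratic generation lets me take only the S-elements $\Delta_A\Delta_B-\Delta_C\Delta_D$ coming from the quadratic binomials $p_Ap_B-p_Cp_D$ of $J_\Lambda$. Each such S-element involves only the columns of $A\cup B$ --- at most six --- and, by the localisation observation, subducts among the minors on those columns; the restriction of $\Lambda$ there is again non-hexagonal, and in the base cases $\Gr(3,m)$ with $m\le6$ a non-hexagonal matching field passes the degree-two SAGBI test (again a finite verification calibrated to the hexagonal pattern). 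Hence every quadratic S-element subducts to zero, and quadratic generation of $J_\Lambda$ promotes this to the full SAGBI property, that is, to $\ini_{\wb_\Lambda}(G_{3,n})=J_\Lambda$ and the toric degeneration.

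The main obstacle is the base-case dichotomy for $\Gr(3,6)$: showing that the hexagonal pattern is \emph{exactly} the obstruction to the SAGBI test there, in both directions. This is where the precise combinatorial definition of hexagonality and the Speyer--Sturmfels classification of the maximal cones of $\operatorname{Trop}(G_{3,6})$ do the real work, and it is also where $r=3$ is genuinely used: for $r\ge4$ there is no reason that six columns (after a fixed contraction) should suffice, nor that a single forbidden pattern captures every obstruction. Finally, it is precisely because the degree-two reduction of the SAGBI test never involves more than six columns that the hypothesis of quadratic generation of $J_\Lambda$ is exactly what bridges the finite base case to arbitrary $n$.
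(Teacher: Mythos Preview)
The paper does not supply its own proof of this statement: it is quoted as a result of Mohammadi and Shaw, with the attribution ``See \cite[Theorems~1.2 and~1.3]{MS}'' and nothing further. The theorem functions here purely as motivating background for the paper's own contributions (Theorems~\ref{thm:quad} and~\ref{thm:toric_deg}), so there is no argument in the present paper against which to compare your proposal.

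As for the proposal on its own terms, the overall architecture is reasonable and is in the spirit of how such results are typically proved. The reformulation via SAGBI bases and the equality $\ini_{\wb_\Lambda}(I_{r,n})=J_\Lambda$ is exactly Theorem~\ref{thm:enough} of the present paper, and your reduction of the $r=3$ converse to a bounded-column check under the quadratic-generation hypothesis is the correct mechanism: a quadratic generator of $J_\Lambda$ involves two $3$-subsets and hence at most six columns, and subduction stays inside those columns. The place you yourself flag as ``the main obstacle'' --- the base-case dichotomy for $\Gr(3,6)$ --- is indeed where all the content lies, and your outline identifies it without resolving it; that is acceptable for a sketch, but be aware that this is essentially the entire proof. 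One point to tighten is the ``contraction'' device in the forward direction for general $r$: you are implicitly assuming that hexagonality of an $r\times n$ matching field localises to a $3\times 6$ obstruction in a way that is compatible with restriction of the SAGBI property, and making that precise requires the actual definition of \emph{hexagonal} from \cite{MS}, which you invoke but never state.
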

\noindent
(The terminologies will be defined in Section~\ref{sec:prepare}.) 
Hence, for the discussion of the existence of toric degenerations of $\Gr(3,n)$ from matching fields, 
the quadratic generation of the matching field ideals is quite important. 
It is mentioned in \cite[Example 3.12 and Remark 3.13]{MS} that the matching field ideals are not necessarily quadratically generated. 
Thus, identifying families of matching fields with quadratic ideals is a natural problem. 
As a nice class of matching fields, $s$-block diagonal matching fields are introduced to be expected that 
their corresponding ideals are quadratically generated (\cite[Definition 4.1]{MS}), and the following is proved: 
\begin{Theorem}[{\cite[Theorem 1.4 and Corollary 1.5]{MS}}]\label{thm:MS}
The ideals of $2$-block diagonal matching fields for $\Gr(3,n)$ are quadratically generated. 
Moreover, every $2$-block diagonal matching field gives rise to a toric degeneration of $\Gr(3,n)$. 
\end{Theorem}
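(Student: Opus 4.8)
The plan is to derive both assertions from a single combinatorial input: an explicit \emph{quadratic Gr\"obner basis} of the toric ideal $I_\Lambda$ of a $2$-block diagonal matching field $\Lambda$ for $\Gr(3,n)$. A quadratic Gr\"obner basis gives the quadratic generation of $I_\Lambda$ for free, and once $I_\Lambda$ is known to be quadratically generated the second assertion follows from the converse direction of the Mohammadi--Shaw theorem quoted above, provided one also checks that every $2$-block diagonal matching field is non-hexagonal. So the work splits into \textbf{(A)} producing the quadratic Gr\"obner basis and \textbf{(B)} verifying non-hexagonality.

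For \textbf{(A)}, I would first record that $I_\Lambda$ is the toric ideal of the squarefree monomial configuration $p_J \mapsto x_{1j_1}x_{2j_2}x_{3j_3}$ indexed by $J \in \binom{[n]}{3}$, where $(j_1,j_2,j_3)$ is the ordering of $J$ prescribed by $\Lambda$. The candidate generators come from the three-term Pl\"ucker relations of $\Gr(3,n)$ via this monomial map: when $\Lambda$ is $2$-block diagonal one checks that two of the (at most three) resulting monomials coincide, so that the corresponding difference of Pl\"ucker products is a quadratic binomial of $I_\Lambda$; collect these into a finite set $G$. I would then fix a monomial order on $K[p_J : J \in \binom{[n]}{3}]$ adapted to the block partition $[n] = B_1 \sqcup B_2$ --- a (reverse) lexicographic order on the Pl\"ucker variables after relabeling indices according to the block they lie in --- and verify Buchberger's criterion: the S-polynomial of every pair in $G$ reduces to zero modulo $G$. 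Since the elements of $G$ are quadratic binomials attached to pairs of $3$-subsets, each S-polynomial involves at most six column indices, so this is a finite, if intricate, combinatorial check. Having shown that $G$ is a Gr\"obner basis of the ideal $\langle G\rangle$ it generates, I would finish by confirming $\langle G\rangle = I_\Lambda$: the inclusion $\langle G\rangle \subseteq I_\Lambda$ is immediate, and since the initial ideal of $\langle G\rangle$ is squarefree one can compare Hilbert functions (equivalently Krull dimensions) and invoke the primeness of the toric ideal $I_\Lambda$ to force equality. This also identifies the initial ideal $\ini_w(I_{3,n})$ of the Pl\"ucker ideal $I_{3,n}$ with $I_\Lambda$ for a weight $w$ inducing $\Lambda$, so the SAGBI machinery reviewed in the introduction already yields a flat toric degeneration; but for the second assertion I prefer the shorter route through \textbf{(B)}.

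For \textbf{(B)}, I would argue directly from the rigid structure of $2$-block diagonal matching fields: on any $3$-subset the ordering prescribed by such a matching field differs from the increasing ordering only by a controlled, ``block-sorted'' permutation, and a short case analysis shows that no six indices can carry the alternating pattern that defines a hexagonal matching field. Non-hexagonality of $\Lambda$ together with the quadratic generation established in \textbf{(A)} then triggers the converse part of the Mohammadi--Shaw theorem, producing a toric degeneration of $\Gr(3,n)$ for every $2$-block diagonal matching field.

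The main obstacle is the uniformity of the S-polynomial reduction in \textbf{(A)}: Buchberger's criterion must be verified not for one ideal but for the whole family of $2$-block diagonal matching fields --- parametrized essentially by the position of the block boundary in $[n]$ --- and for all $n$ at once. The S-polynomials split into cases according to how the two defining $3$-subsets overlap (in $0$, $1$, or $2$ indices) and to where the block boundary sits relative to those indices, and the monomial order must be chosen so that \emph{every} such case reduces; finding one order, together with a clean normal-form bookkeeping, that works uniformly is the delicate point. A secondary, more routine difficulty is checking that the specialized Pl\"ucker binomials generate all of $I_\Lambda$ rather than a proper subideal, which is where the Hilbert-function/primeness comparison enters.
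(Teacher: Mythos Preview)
This theorem is cited in the paper as a result of Mohammadi--Shaw and is not reproved there directly; rather, it falls out as the special case $s=2$, $r=3$ of the paper's own Theorems~\ref{thm:quad} and~\ref{thm:toric_deg}. Your plan is reasonable and would recover the statement, but it diverges from the paper's argument in both parts.

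For \textbf{(A)}, the paper does not run Buchberger's criterion on binomials extracted from Pl\"ucker relations. Instead it writes down an explicit family of quadratic binomials (Lemma~\ref{binomialslemma}, types (i)--(vii)) and proves they form a Gr\"obner basis by a \emph{reduction to $r=2$}: once rows $3,\ldots,r$ of any putative non-reduced monomial are sorted, the residual relation lives in the toric ideal $I_{G_\ab}$ of a bipartite graph built from the second row of the weight matrix (Section~\ref{sec:graph}); Proposition~\ref{revlexbipartite} shows $I_{G_\ab}$ has a quadratic reduced Gr\"obner basis under the chosen reverse-lex order, and this is lifted back via \cite[Theorem~3.11]{binomialideals}. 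No S-polynomials of the $r\times n$ binomials are ever computed, and no separate ``$\langle G\rangle = I_\Lambda$'' step is needed. Your direct Buchberger approach would work in principle but is exactly the uniform-in-$n$ case analysis the paper's bipartite-graph reduction is designed to bypass. For \textbf{(B)}, the paper never invokes non-hexagonality or the converse Mohammadi--Shaw theorem; it proves the SAGBI equality $J_{\Lambda_\ab}=\ini_{w_{M_\ab}}(I_{r,n})$ directly by combining quadratic generation with the degree-$2$ dimension count $\dim_K[A_\ab]_2=\dim_K[A_0]_2$ of Lemma~\ref{standard_monomials}. Your route through non-hexagonality is perfectly valid for $r=3$ and arguably shorter there, but it does not extend to $r>3$, whereas the paper's argument does; note also that your claim in \textbf{(A)} that ``two of the three resulting monomials coincide'' is essentially the non-hexagonality you verify again in \textbf{(B)}.
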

Furthermore, in \cite{CM}, Theorem~\ref{thm:MS} is generalized for any $r$. Namely, 
\begin{Theorem}[{\cite[Theorems 4.1 and 4.3]{CM}}]\label{thm:CM}
The ideals of $2$-block diagonal matching fields for $\Gr(r,n)$ are quadratically generated. 
Moreover, every $2$-block diagonal matching field gives rise to a toric degeneration of $\Gr(r,n)$. 
\end{Theorem}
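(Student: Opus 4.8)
The plan is to realise the matching field ideal $I_\Lambda$ as a toric ideal, to exhibit a finite generating set consisting of quadrics, and then to upgrade this to the statement that the Pl\"ucker coordinates form a SAGBI basis with respect to the weight defining $\Lambda$, which by the standard theory (see \cite[Section 11]{Stu}) produces the toric degeneration. Throughout, for an $r$-subset $J\subseteq[n]$ the matching field $\Lambda$ records an ordering $\Lambda(J)$ of $J$, and $I_\Lambda=\ker\phi_\Lambda$, where $\phi_\Lambda$ sends the Pl\"ucker variable $p_J$ to the monomial read off the diagonal of the weight matrix of $\Lambda$ along the columns $J$ in the order $\Lambda(J)$. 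For a $2$-block diagonal matching field the index set $[n]$ splits into two consecutive blocks, $\Lambda(J)$ is the increasing order on $J$ when $J$ is suitably aligned with this decomposition and differs from it by a single controlled transposition otherwise; this is exactly the structure forcing $\Lambda$ to be non-hexagonal, so the obstruction of \cite[Theorems 1.2 and 1.3]{MS} does not occur.

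First I would analyse the three-term Pl\"ucker relations $p_{S\cup\{i,k\}}\,p_{S\cup\{j,\ell\}}-p_{S\cup\{i,j\}}\,p_{S\cup\{k,\ell\}}-p_{S\cup\{i,\ell\}}\,p_{S\cup\{j,k\}}$ under this weight. Non-hexagonality guarantees that on each such relation the initial form is a binomial, and pushing these binomials into $I_\Lambda$ yields an explicit finite family $\Qc$ of quadrics contained in $I_\Lambda$. The core of the argument is to prove that $\Qc$ generates $I_\Lambda$. Here I would use the block structure: restricted to subsets lying inside a single block, $\phi_\Lambda$ agrees with the monomial map of the diagonal (Gelfand--Tsetlin) matching field, for which quadratic generation is classical --- it is the straightening law on incomparable pairs of tableaux --- while cross-block subsets contribute only the controlled transposition. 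Given a binomial $\mathbf{p}^{\ab}-\mathbf{p}^{\bb}\in I_\Lambda$ of degree at least $3$, I would run a two-stage reduction: straighten the within-block part of the two tableaux with the classical quadrics until the discrepancy between $\ab$ and $\bb$ is concentrated in the cross-block columns, and then clear that discrepancy using the quadrics of $\Qc$ coming from relations that involve the transposition. A suitable distance measure between $\ab$ and $\bb$ strictly decreases at each step, so the process terminates and expresses $\mathbf{p}^{\ab}-\mathbf{p}^{\bb}$ in terms of $\Qc$.

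For the toric degeneration I would show that the Pl\"ucker coordinates form a SAGBI basis of the homogeneous coordinate ring of $\Gr(r,n)$ with respect to the matching field weight $w$. Since the elements of $\Qc$ are precisely the initial forms of the three-term Pl\"ucker relations, a subduction argument --- reduce an arbitrary element of the Pl\"ucker ideal modulo the chosen generating set while tracking initial forms --- shows that every initial form already lies in the ideal generated by these binomials, so $\ini_w(I_{r,n})=I_\Lambda$. As $I_\Lambda$ is a prime ideal of the same dimension as $\Gr(r,n)$, this exhibits a flat family over $\AA^1$ whose special fibre is the toric variety $X_\Lambda$, i.e.\ a toric degeneration; for $r=3$ one may alternatively invoke the quoted converse in \cite[Theorems 1.2 and 1.3]{MS} directly, but the SAGBI route is what handles arbitrary $r$.

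I expect the main obstacle to be the reduction step proving that $\Qc$ generates $I_\Lambda$: deciding when two products $p_{J_1}\cdots p_{J_m}$ and $p_{K_1}\cdots p_{K_m}$ have equal image under $\phi_\Lambda$, and organising the straightening so that the cross-block transpositions are absorbed without producing new obstructions, is delicate, and it is precisely where non-hexagonality has to be used.
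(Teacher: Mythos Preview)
Your outline is a plausible plan and is in fact closer in spirit to the original argument in \cite{CM} than to this paper's own route. Here Theorem~\ref{thm:CM} is obtained as the special case $s=2$ of Theorems~\ref{thm:quad} and \ref{thm:toric_deg}, and the method is rather different from yours. For quadratic generation the paper does not work with initial forms of three-term Pl\"ucker relations at all: it writes down seven explicit families of quadratic binomials (Lemma~\ref{binomialslemma} (i)--(vii)), checks by hand that they lie in $J_{\Lambda_\ab}$, and proves they form a \emph{Gr\"obner basis} for a specific reverse lexicographic order. The key device is a reduction to $r=2$: once the binomials are seen to be ``sorted'' in rows $3,\ldots,r$, any hypothetical non-reducible binomial in $J_{\Lambda_\ab}$ is forced to come from the toric ideal $I_{G_\ab}$ of the bipartite graph attached to the first two rows, and Proposition~\ref{revlexbipartite} handles that ideal. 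For the SAGBI statement the paper does not run a subduction either: given quadratic generation, Lemma~\ref{standard_monomials} shows $\dim_K[A_\ab]_2=\dim_K[A_0]_2$ by an explicit enumeration of standard monomials, and the equality $\ini_{w_{M_\ab}}(I_{r,n})=J_{\Lambda_\ab}$ then follows from the argument of \cite[Theorem~4.3]{CM}.

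Your approach keeps a more visible link to the Pl\"ucker relations, but the step you yourself flag --- the straightening reduction showing that the initial binomials of three-term relations already generate $J_\Lambda$ --- is the entire difficulty, and your sketch does not yet supply a working distance function or explain why the cross-block moves do not create new unsortedness. The paper's bipartite-graph reduction bypasses this completely and yields the stronger conclusion of a quadratic Gr\"obner basis. One further remark: once you know your set $\Qc$ generates $J_\Lambda$, the SAGBI claim is immediate from the chain $\langle\Qc\rangle\subseteq\ini_{w_M}(I_{r,n})\subseteq J_\Lambda=\langle\Qc\rangle$, so the separate subduction step you describe is redundant.
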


Taking those theorems into account, we prove the following main results of the present paper: 
\begin{Theorem}[{See Theorems~\ref{thm:quad} and \ref{thm:toric_deg}}]\label{thm:main}
Given $\ab=(a_1,\ldots,a_s) \in \ZZ_{>0}^s$ with $\sum_{i=1}^s a_i=n$ and $a_i \in \{1,2\}$ for $i \not\in\{1,s\}$, 
consider the $s$-block diagonal matching field $\Lambda_\ab$. 
Then the matching field ideal $J_{\Lambda_\ab}$ has a quadratic Gr\"obner basis. 
In particular, $J_{\Lambda_\ab}$ is quadratically generated. 

Moreover, the generating set $\{\det(\xb_I) : I \in \Ib_{r,n}\}$ of the Pl\"ucker algebra $\Ac_{r,n}$ forms a SAGBI basis for $\Ac_{r,n}$ 
with respect to the weight matrix $M_\ab$ associated with $\Lambda_\ab$. 
Consequently, every $\Lambda_\ab$ gives rise to a toric degeneration of $\Gr(r,n)$. 
\end{Theorem}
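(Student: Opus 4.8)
The plan is to treat the two assertions in turn: the quadratic Gr\"obner basis is the combinatorial core, and the SAGBI/degeneration statement then follows from a standard lifting argument. Throughout I would index the Pl\"ucker variables by $y_I$ ($I\in\Ib_{r,n}$), so that the Pl\"ucker algebra is presented as $\Ac_{r,n}=K[y_I:I\in\Ib_{r,n}]/I_{r,n}$ with $I_{r,n}$ the Pl\"ucker ideal, and let $w$ be the weight vector on the polynomial ring induced by $M_\ab$, normalized so that $\ini_w(\det(\xb_I))$ is, up to sign, exactly the monomial in the $x_{i,j}$ that $\Lambda_\ab$ assigns to $I$. Then $J_{\Lambda_\ab}=\ker\big(K[y_I:I\in\Ib_{r,n}]\to K[x_{i,j}],\ y_I\mapsto\ini_w(\det(\xb_I))\big)$ is the toric ideal of that monomial configuration.

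\textbf{Step 1: a quadratic Gr\"obner basis of $J_{\Lambda_\ab}$.} Since $J_{\Lambda_\ab}$ is toric it is generated by binomials $\yb^{\ub}-\yb^{\vb}$ whose two monomials have the same image. I would first write down an explicit finite set $\Gc$ of \emph{quadratic} such binomials $y_Ay_B-y_Cy_D$ together with a term order $<$, and then prove that $\Gc$ is a Gr\"obner basis with respect to $<$. For the verification I would use a normal-form (``straightening'') argument: show that modulo $\Gc$ every degree-two monomial $y_Ay_B$ reduces to a unique ``sorted'' representative, that distinct sorted monomials of a given degree have distinct images under $\Lambda_\ab$, and hence that the $<$-standard monomials biject in each degree with the monomials of the initial algebra, which forces $\ini_<(\Gc)$ to generate $\ini_<(J_{\Lambda_\ab})$. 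The hypothesis $a_i\in\{1,2\}$ for $i\notin\{1,s\}$ is precisely what keeps this straightening quadratic, and I expect the induction to run on $s$, peeling off one inner block of size $1$ or $2$ at a time and invoking Theorem~\ref{thm:CM} (the case $s=2$) as the base. The quadratic generation of $J_{\Lambda_\ab}$ is then immediate. I expect this step to be the main obstacle: the delicate case is a pair $I,J$ whose indices are spread across one of the large outer blocks and a chain of small inner blocks, where the relevant S-polynomials (equivalently, the rewriting steps) mix binomials of different combinatorial type and one must check they reduce to zero; this is where the explicit shape of $M_\ab$ enters essentially.

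\textbf{Step 2: SAGBI basis and toric degeneration.} One always has $\ini_w(I_{r,n})\subseteq J_{\Lambda_\ab}$, since the minimal-$w$-weight part of any polynomial relation among the $\det(\xb_I)$ is a relation among their initial monomials. For the reverse inclusion I would lift the Gr\"obner basis from Step 1: for each $g=y_Ay_B-y_Cy_D\in\Gc$, exhibit a quadratic Pl\"ucker relation $P\in I_{r,n}$ — a three-term exchange relation among the Pl\"ucker coordinates indexed by subsets built from $A,B,C,D$ — such that $y_Ay_B$ and $y_Cy_D$ are exactly the terms of $P$ of minimal $w$-weight, so that $\ini_w(P)=\pm g$. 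That $P$ has \emph{exactly} two minimal-weight terms rather than three is the crucial non-degeneracy point; it reflects the non-hexagonality forced by the block-diagonal shape, and I would verify it block by block using the analysis of Step 1. Granting the lifts, $J_{\Lambda_\ab}=(\Gc)\subseteq\ini_w(I_{r,n})\subseteq J_{\Lambda_\ab}$, hence $\ini_w(I_{r,n})=J_{\Lambda_\ab}$; by the standard criterion for subalgebra bases (see \cite[Section 11]{Stu}) this says exactly that $\{\det(\xb_I):I\in\Ib_{r,n}\}$ is a SAGBI basis for $\Ac_{r,n}$ with respect to $M_\ab$. The associated flat family over $K[t]$, whose existence is guaranteed once the SAGBI property holds, then has general fiber $\Gr(r,n)$ and special fiber the projective toric variety defined by $J_{\Lambda_\ab}$, which is the desired toric degeneration.
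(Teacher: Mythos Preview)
Your two-step outline is sound, but both steps diverge from the paper's actual arguments, and the substeps you flag as difficult are precisely where the paper takes a different and shorter route.

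For Step~1, the paper does not induct on $s$ or peel off inner blocks. The decisive structural observation you are missing is that $\Lambda_\ab(I)\in\{\mathrm{id},(1\ 2)\}$ for every $I$, so rows $3,\dots,r$ behave exactly as in the diagonal matching field and only the first two rows carry nontrivial combinatorics. Accordingly the paper writes down seven explicit families of quadratic binomials (Lemma~\ref{binomialslemma}) and a specific reverse lexicographic order; the proof of Theorem~\ref{thm:quad} shows that any irreducible binomial of $J_{\Lambda_\ab}$ not reduced by these must already agree in every row $k\neq 2$, which collapses the obstruction to a binomial in the toric ideal $I_{G_\ab}$ of a bipartite graph (the $r=2$ situation). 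That ideal is handled separately and directly in Proposition~\ref{revlexbipartite} by analyzing $6$-cycles in $G_\ab$; the hypothesis $a_i\in\{1,2\}$ for $i\notin\{1,s\}$ enters only at that point, as the condition (\ref{b_d_condition2}) ruling out the single surviving bad $6$-cycle pattern. Your proposed induction on $s$ with Theorem~\ref{thm:CM} as base is a different architecture, and the inductive step (how removing a size-$1$ or size-$2$ inner block interacts with the Gr\"obner property for the chosen order) is not set up.

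For Step~2, the paper does not lift the generators $g\in\Gc$ to Pl\"ucker relations one by one. Instead, following the scheme of \cite[Theorem~4.3]{CM}, it combines the quadratic generation from Step~1 with the degree-$2$ Hilbert function equality $\dim_K[A_\ab]_2=\dim_K[A_0]_2$ (Lemma~\ref{standard_monomials}), proved by enumerating the quadratic monomials outside $\langle\ini_<(\Gc_\ab)\rangle$ and checking, case by case on $|\{\ell_1,\ell_2,m_1,m_2\}|$, that their number is independent of $\ab$. This completely sidesteps the ``exactly two minimal-weight terms'' verification you identify as the crux of your approach; your direct lifting would require carrying that check across all seven binomial families of Lemma~\ref{binomialslemma}, not just the three-term (hexagon) relations, and you have not done so. The dimension-count shortcut is what makes the paper's argument brief.
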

We notice that Theorem~\ref{thm:main} directly implies Theorem~\ref{thm:CM}. 
Moreover, as we can see in Sections~\ref{sec:qGB} and \ref{sec:toric_deg}, the proofs become a little simpler than those of Theorem~\ref{thm:CM}.

The paper is organized as follows: 
In Section~\ref{sec:prepare}, we prepare the necessary terminologies and the notation we will use. 
In Section~\ref{sec:graph}, for the proof of our theorem, we recall the theory of toric ideals of the edge rings and their Gr\"obner bases. 
In Section~\ref{sec:qGB}, we will give a proof of the first half part of Theorem~\ref{thm:main} (Theorem~\ref{thm:quad}). 
In Section~\ref{sec:toric_deg}, we will give a proof of the second half part of Theorem~\ref{thm:main} (Theorem~\ref{thm:toric_deg}). 

\medskip

\section*{Acknowledgements}
The authors are partially supported by JSPS KAKENHI $\sharp$18H01134 and $\sharp$20K03513. 

\medskip


\section{Pl\"ucker algebra and $s$-Block diagonal matching fields}\label{sec:prepare}

Given integers $r$ and $n$ with $1 < r < n$, let $\Ib_{r,n}$ be the set of all $r$-subsets of $[n]=\{1,2,\ldots,n\}$. 
Let $S = K[P_I : I \in  \Ib_{r,n}]$ be the polynomial ring with $\binom{n}{r}$ variables. 
Let $\xb=(x_{ij})_{1 \leq i \leq r, 1 \leq j \leq n}$ be the $r \times n$ matrix of variables 
and let $R=K[\xb]$ be the polynomial ring with $rn$ variables. 
The {\em Pl\"ucker ideal} $I_{r,n}$ is defined by the kernel of the ring homomorphism 
$$\psi:S \rightarrow R, \;\; P_I \mapsto \det(\xb_I),$$
where $\xb_I$ denotes the $r \times r$ submatrix of $\xb$ whose columns are indexed by $I$. 
The {\em Pl\"ucker algebra} $\Ac_{r,n}$ is the image $\Im(\psi)$ of this map, which is isomorphic to $S/I_{r,n}$. 
The Pl\"ucker algebra $\Ac_{r,n}$ is well-known to be a homogeneous coordinate ring of the Pl\"ucker embedding of the Grassmannians $\Gr(r,n)$, 
called the {\em Pl\"ucker embedding}.

Let ${\mathfrak S}_r$ denote the symmetric group on $[r]$. 
An $r \times n$ {\em matching field} is a map $\Lambda : \Ib_{r,n} \rightarrow {\mathfrak S}_r$. 
For $I=\{i_1,\ldots,i_r\} \in \Ib_{r,n}$ with $1 \leq i_1 < \cdots < i_r \leq n$ and a matching field $\Lambda$, we associate the monomial of $R$ 
$$\xb_{\Lambda(I)}:=x_{\sigma(1) i_1} \cdots x_{\sigma(r) i_r},$$
where $\sigma=\Lambda(I) \in {\mathfrak S}_r$. 
We define a ring homomorphism $$\psi_\Lambda : S \rightarrow R, \;\;\psi_\Lambda(P_I) = \sgn(\Lambda(I))\xb_{\Lambda(I)},$$ 
where $\sgn(\sigma)$ denotes the signature of $\sigma \in {\mathfrak S}_r$. 
Then the {\em matching field ideal} $J_\Lambda$ of $\Lambda$ is the kernel of $\psi_\Lambda$.

\begin{Definition}[{Coherent matching fields (\cite[Section 1]{SZ}, \cite[Definition 2.8]{MS})}]
A matching field $\Lambda$ is said to be {\em coherent} if there exists an $r \times n$ matrix $M \in \RR^{r \times n}$ with its entries in $\RR$ 
such that for every $I \in \Ib_{r,n}$ the initial form $\ini_M( \det(\xb_I) )$ of $\det(\xb_I)$ with respect to $M$, 
which is the sum of all terms of $\det(\xb_I)$ having the lowest weights, is equal to $\psi_\Lambda(P_I)$. 
In this case, we call $\Lambda$ a coherent matching field {\em induced by $M$}. 
For the matching field ideals, we use the notation $J_M$ instead of $J_\Lambda$ if $\Lambda$ is a coherent matching field induced by $M$. 
\end{Definition}
In the original definition \cite[Section 1]{SZ} of coherent matching fields, 
the initial form is set to be the sum of all terms having the {\em highest} weights, 
but we usually employ the definition with the sum of all terms having the {\em lowest} weights 
when it is related to the context of tropical geometry, following the convention of our main reference \cite{MS}. 

The main object of the present paper is the following matching fields: 
\begin{Definition}[{\cite[Definition 4.1]{MS}, $s$-block diagonal matching fields}]\label{def:block}
Let $\ab = (a_1,\dots, a_s) \in \ZZ_{>0}^s$ such that $\sum_{i=1}^s a_i = n$. For $k=1,2,\dots , s$, let 
$$I_k=\{\alpha_{k-1}+1 ,\alpha_{k-1}+ 2,  \ldots, \alpha_k \}=[\alpha_k] \setminus [\alpha_{k-1}],$$
where $\alpha_0=0$ and $\alpha_k= \sum_{i=1}^k a_i$.
Note that $\alpha_s =n$.
Then the $s$-{\em block diagonal matching field} $\Lambda_\ab$ associated with $\ab$ is defined by 
\[\Lambda_\ab (I) = \left\{
\begin{array}{cc}
(1 \ 2) & \mbox{if } |I \cap I_q| = 1 \mbox{ where } q = \min \{ t : I_t \cap I \neq \emptyset\}, \\
 {\rm id} & \mbox{otherwise.}
\end{array}
\right.\]
\end{Definition}
Any $s$-block diagonal matching field $\Lambda_\ab$ is coherent.
In fact, $\Lambda_\ab$ is induced by the following weight matrix $M_\ab$:
\[
\left(
\begin{array}{cccc|ccc|c|cccc}
0 & 0& \cdots & 0 &0 & \cdots& 0&\cdots&0&\cdots&0 & 0\\
\alpha_1 & \alpha_1 -1 & \cdots & 1 & 
\alpha_2 &  \cdots & \alpha_1 +1 & 
\cdots &
\alpha_s &  \cdots & \alpha_{s-1} +2& \alpha_{s-1} +1\\
n \beta  & (n-1) \beta &\cdots &&&&&& &\cdots& 2\beta& \beta\\
 \vdots &\vdots &   &   & &  &    & & & & \vdots & \vdots \\
n \beta^{r-2}  & (n-1) \beta^{r-2} &\cdots &&&&&& &\cdots& 2\beta^{r-2}& \beta^{r-2}\\
\end{array} \right),
\]
where $\beta \gg 0$.

\begin{Example}\label{ex:block}
If $r=3$, $n=9$ and $\ab=(2,2,2,3)$, then 
\[
M_\ab=
\begin{pmatrix}
0 & 0 & 0 & 0 & 0 & 0 & 0 & 0 & 0\\
2 & 1 & 4 & 3 & 6 & 5 & 9 & 8 & 7\\
900 & 800 & 700 & 600 & 500 & 400 & 300 & 200 & 100
\end{pmatrix}.
\]
On the other hand, if $r=4$, $n=9$ and $\ab=(5,4)$, then 
\[
M_\ab =
\begin{pmatrix}
0 & 0 & 0 & 0 & 0 & 0 & 0 & 0 & 0\\
5 & 4 & 3 & 2 & 1 & 9 & 8 & 7 & 6\\
900 & 800 & 700 & 600 & 500 & 400 & 300 & 200 & 100 \\
90000 &80000 &70000 &60000 &50000 &40000 &30000 &20000 &10000
\end{pmatrix}.
\]
\end{Example}

\begin{Remark}
(a) When $\ab=(n)$, i.e., $s=1$, 
the corresponding block diagonal matching field is so-called the {\em diagonal matching field} (see \cite[Example 1.3]{SZ}). 
This actually gives rise to the Gelfand-Tsetlin degeneration (see \cite[Section 14]{MillerSturmfels}).

(b) In \cite{CM}, the terminology ``block diagonal'' is used for ``$2$-block diagonal'' in the sense of Definition~\ref{def:block}. 
Namely, the first example of Example~\ref{ex:block} is not block diagonal in the sense of \cite{CM}. 
\end{Remark}

Let $\wb_\ab = (w_1,\ldots, w_n)$ be the second row of $M_\ab$.
Note that, for $I=\{i_1,\ldots,i_r\}$ with $1 \leq i_1 < \cdots < i_r \leq n$, we have
\[
\Lambda_\ab (I) = (1 \ 2) \  \Longleftrightarrow \ w_{i_1} < w_{i_2}.
\]
Thus $\wb_\ab$ is useful to study $\Lambda_\ab$. Moreover $\wb_\ab$ satisfies
\begin{equation}
\label{b_d_condition}
i < j \mbox{ and } w_i > w_j \Longrightarrow w_i  >  w_{i+1} > \cdots > w_{j-1} > w_j.
\end{equation}

In the present paper, we mainly consider $s$-block diagonal matching fields $\Lambda_\ab$ such that $a_i \in \{1,2\}$ for all $i \notin \{1,s\}$.
This is equivalent to the condition 
\begin{equation}\label{b_d_condition2}
i < j \mbox{ and  } w_i < w_j > w_{j+1} > w_{j+2}
\Longrightarrow w_j> w_{j+1} > \cdots > w_n.
\end{equation}
In particular, if $s=2$, then the condition \eqref{b_d_condition2} is always satisfied.

\medskip


\section{Edge rings of bipartite graphs}\label{sec:graph}

In this section, we recall the notion of edge rings of bipartite graphs. 
We will see that the matching field ideal of a $2 \times n$ matching field $\Lambda$ 
is the toric ideal of a certain bipartite graph associated with $\Lambda$. 
We will also discuss Gr\"obner bases of matching field ideals. 
Consult \cite[Chapter~5]{binomialideals} and \cite{Vbook} for the introduction to the toric ideals of graphs and 
\cite{binomialideals, Stu} for the introduction to the theory of Gr\"obner bases. 

A graph $G$ is said to be {\em simple} if $G$ has no loops and no multiple edges.
A graph  $G$ is said to be {\em bipartite} if the vertex set $V(G)$ of $G$
can be divided into $V(G)=U \sqcup V$ with $E(G) \subset U \times V$,
where $E(G)$ is the edge set of $G$. 
Let $G$ be a finite simple bipartite graph on the vertex set $V(G)=\{u_1,\ldots,u_m\} \sqcup \{v_1,\ldots,v_n\}$ with the edge set $E(G)$. 
Let $R=K[s_1, \dots, s_m, t_1,\dots,t_n]$ and $S=K[x_{ij} : \{u_i,v_j\} \in E(G)]$ be the polynomial rings
over $K$. 
Then the {\em toric ideal} $I_G$ of $G$ is the kernel of the ring homomorphism $\pi : S \rightarrow R, \;\; x_{ij} \mapsto s_i t_j.$ 
An \textit{even cycle} in the bipartite graph $G$ of length $2q$ is a finite sequence of vertices of the form 
\begin{equation}
\label{evencycle}
C =(u_{i_1}, v_{j_1},u_{i_2},v_{j_2},\dots, u_{i_q}, v_{j_q})
\end{equation}
where $\{u_{i_1}, v_{j_q}\} \in E(G)$, $\{u_{i_k}, v_{j_k}\} \in E(G)$ for $1 \le k \le q$, 
$\{u_{i_{k+1}}, v_{j_k}\} \in E(G)$ for $1 \le k \le q-1$, and there exist no repeated vertices. 
Given an even cycle $C$ in \eqref{evencycle}, we write $f_C$ for the binomial 
$f_C =\prod_{k=1}^{q} x_{i_k j_k}- x_{i_1 j_q} \prod_{k=1}^{q-1} x_{i_{k+1} j_k}$ belonging to $I_G$.
The following proposition is due to Villarreal \cite[Proposition 3.1]{Vpaper}.

\begin{Proposition}[{\cite[Corollary 5.12]{binomialideals}}]\label{toricbipartite}
Let $G$ be a bipartite graph. Then the reduced Gr\"{o}bner basis of $I_G$ 
with respect to any monomial order 
consists of the binomials of the form $f_C$, 
where $C$ is an even cycle in $G$. In particular, $I_G$ is generated by those binomials.
\end{Proposition}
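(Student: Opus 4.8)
The plan is to derive Proposition~\ref{toricbipartite} from the general theory of Graver bases of toric ideals, in the spirit of \cite[Chapter~5]{binomialideals}. Recall that for any toric ideal $I_{A}$ the \emph{Graver basis} --- the set of \emph{primitive} binomials of $I_{A}$, where $x^{\ub_+}-x^{\ub_-}$ is called primitive if no binomial $x^{\vb_+}-x^{\vb_-}\in I_{A}$ with $\vb\neq\pm\ub$ satisfies $x^{\vb_+}\mid x^{\ub_+}$ and $x^{\vb_-}\mid x^{\ub_-}$ --- contains every reduced Gr\"obner basis of $I_{A}$, with respect to every monomial order (see \cite{Stu}). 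Since a Gr\"obner basis generates the ideal, it therefore suffices to prove the single assertion that \emph{every primitive binomial of $I_{G}$ has the form $\pm f_{C}$ for some even cycle $C$ of $G$}: this at once shows that each reduced Gr\"obner basis consists of binomials $f_{C}$, and --- specializing to one fixed monomial order --- that $I_{G}$ is generated by such binomials.

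To prove this assertion I would first recall that $I_{G}=I_{A_{G}}$, where the column of $A_{G}$ indexed by an edge $\{u_{i},v_{j}\}$ is $\eb_{u_{i}}+\eb_{v_{j}}$, so that $\ub\in\ker A_{G}$ if and only if at every vertex $w$ of $G$ the coordinates of $\ub$ along the edges incident to $w$ sum to zero. Given a primitive binomial $x^{\ub_+}-x^{\ub_-}$ with $\ub=\ub_+-\ub_-$, I would form the balanced two-coloured multigraph $H$ obtained by taking $|\ub_{e}|$ copies of each edge $e$, coloured positively if $\ub_{e}>0$ and negatively if $\ub_{e}<0$; the kernel condition says exactly that at each vertex of $H$ the positive and negative degrees coincide. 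An Eulerian-type routing argument then decomposes the edge set of $H$ into closed walks, each of which alternates between positive and negative edges. Because $G$ is bipartite, every such closed walk has even length, hence is an even closed walk of $G$ and yields a binomial $f_{W}\in I_{G}$ whose positive and negative monomials divide $x^{\ub_+}$ and $x^{\ub_-}$ respectively. Primitivity then forces this decomposition to consist of a single alternating closed walk $W$ using every edge-copy of $H$.

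It remains to show that $W$ is a simple cycle (so, in particular, each edge of $G$ occurs in $W$ at most once, i.e.\ $\ub_{e}\in\{-1,0,1\}$). If $W$ revisited a vertex $w$, one could cut $W$ at $w$ into two shorter closed walks; each is again alternating in colour and --- using once more that $G$ is bipartite --- has even length, hence contributes a binomial of $I_{G}$ properly dividing $x^{\ub_+}-x^{\ub_-}$, contradicting primitivity. Therefore $W$ is a simple cycle, which is even since $G$ is bipartite, and $x^{\ub_+}-x^{\ub_-}=\pm f_{W}$, as required.

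I expect the main obstacle to be the careful bookkeeping in the two steps involving closed walks: verifying that the alternating-colour structure (so that the positive edges are precisely the odd-position edges of the walk, and hence $f_{W}$ really is the prescribed binomial) is preserved under the Eulerian decomposition and under splitting at a repeated vertex, and confirming that each resulting piece is a genuine, nontrivial binomial of $I_{G}$. Bipartiteness is exactly what makes these manipulations go through, since it guarantees that every closed sub-walk has even length; for non-bipartite graphs the primitive binomials also include those arising from two edge-disjoint odd cycles joined by a path, and the clean description purely in terms of even cycles fails.
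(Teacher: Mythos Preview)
The paper does not give its own proof of this proposition; it is stated as a known result due to Villarreal \cite[Proposition~3.1]{Vpaper} and cited from \cite[Corollary~5.12]{binomialideals}. Your Graver-basis argument is correct and is essentially the standard proof appearing in those references. The one step you flag as delicate --- the colour-alternating Eulerian decomposition --- can be made clean by orienting each positive edge from the $U$-side to the $V$-side and each negative edge from $V$ to $U$: the kernel condition then says in-degree equals out-degree at every vertex, so the directed multigraph $H$ decomposes into directed cycles, each of which is automatically colour-alternating and (by bipartiteness) of even length. The splitting step at a repeated vertex likewise goes through because each of the two sub-walks is again a closed walk in a bipartite graph, hence of even length, so the colours match up across the cut and both pieces give nonzero binomials in $I_G$ properly dividing $x^{\ub_+}-x^{\ub_-}$.
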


Let $\Lambda_M$ be the coherent matching field induced by a $2 \times n$ weight matrix 
\[M=
\begin{pmatrix}
0 & 0 & \cdots & 0 & 0\\
w_1 & w_2 & \cdots & w_{n-1} & w_n
\end{pmatrix},
\]
where $w_i \neq w_j$ for any $i\neq j$, let $G_M$ be a bipartite graph 
on the vertex set $\{u_1,\ldots,u_n\} \sqcup \{v_1,\ldots,v_n\}$ with the edge set 
\[
E(G_M) = \{ \{ u_i, v_j \} : w_i > w_j \}.
\]
It is easy to see that the matching field ideal $J_{\Lambda_M}$ coincides with the toric ideal $I_{G_M}$ of the bipartite graph $G_M$. 
Let 
\[M_{\rm diag}=
\begin{pmatrix}
0 & 0 & \cdots & 0 & 0\\
n & n-1 & \cdots & 2 &1
\end{pmatrix}.
\]
Then $M_{\rm diag}$ is a weight matrix of a diagonal matching field.
It is known \cite[Proposition 1.11]{SZ} that $G_M$ and $G_{M_{\rm diag}}$ are isomorphic as graphs.
Moreover, $J_{M_{\rm diag}}$ has a quadratic Gr\"obner basis with respect to a reverse lexicographic order (\cite[Remark 11.11]{Stu}). 
Thus we have the following in general. 
\begin{Proposition}
\label{HibiRing}
Let $\Lambda$ be a coherent $2 \times n$ matching field
induced by a weight matrix $M$.
Then there exists a reverse lexicographic order $<$ such that
\[
{\mathcal G}=
\{ x_{i \ell} x_{k j} -  x_{i j} x_{k \ell} : 
\{u_i, v_\ell\}, \{u_i,v_j\} , \{u_k,v_j\},\{u_k,v_\ell \} \in E(G_M),
i <k \mbox{ and } j < \ell
 \}
\]
is a (quadratic) Gr\"{o}bner basis of $J_\Lambda$ with respect to $<$.
In particular, $J_\Lambda$ is generated by quadratic binomials in ${\mathcal G}$.
\end{Proposition}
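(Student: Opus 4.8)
The plan is to reduce the statement to the already-known diagonal case by transporting everything along the graph isomorphism $G_M\cong G_{M_{\rm diag}}$, so I would begin by making that isomorphism completely explicit. Since $w_1,\dots,w_n$ are pairwise distinct, there is a unique $\tau\in{\mathfrak S}_n$ with $w_{\tau(1)}>w_{\tau(2)}>\cdots>w_{\tau(n)}$, and then $w_i>w_j$ holds if and only if $\tau^{-1}(i)<\tau^{-1}(j)$. Hence $u_i\mapsto u_{\tau^{-1}(i)}$, $v_j\mapsto v_{\tau^{-1}(j)}$ is a bijection $E(G_M)\to E(G_{M_{\rm diag}})$; this is the isomorphism of \cite[Proposition~1.11]{SZ} written out, and the point I want to stress is that it is induced by a \emph{single} permutation acting simultaneously on the two colour classes. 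It therefore lifts to a $K$-algebra isomorphism $\phi$ between the two polynomial rings with $\phi(x_{ij})=x_{\tau^{-1}(i)\tau^{-1}(j)}$, and comparing $\phi$ with the analogous relabeling $s_i\mapsto s_{\tau^{-1}(i)}$, $t_j\mapsto t_{\tau^{-1}(j)}$ of the target of the toric map shows $\phi(J_\Lambda)=\phi(I_{G_M})=I_{G_{M_{\rm diag}}}=J_{M_{\rm diag}}$. Because $\phi$ sends $4$-cycles of $G_M$ bijectively to $4$-cycles of $G_{M_{\rm diag}}$, it also sends the set $\mathcal G$ of square binomials of $G_M$ onto the set of square binomials of $G_{M_{\rm diag}}$, up to sign.

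Next I would feed in the diagonal case. By \cite[Remark~11.11]{Stu}, $J_{M_{\rm diag}}$ admits a quadratic Gr\"obner basis with respect to some reverse lexicographic order $<_0$; hence its reduced Gr\"obner basis $\mathcal H_0$ with respect to $<_0$ is quadratic as well, and by Proposition~\ref{toricbipartite} it consists of binomials $f_C$ attached to even cycles $C$ of $G_{M_{\rm diag}}$. Being quadratic, every such $C$ is a $4$-cycle, so $\mathcal H_0$ is a set of square binomials of $G_{M_{\rm diag}}$. Now I would pull everything back along $\phi$: relabeling the variables of a polynomial ring sends reverse lexicographic orders to reverse lexicographic orders, so the order $<$ on $K[x_{ij}:\{u_i,v_j\}\in E(G_M)]$ defined by $m<m'\Longleftrightarrow\phi(m)<_0\phi(m')$ is again reverse lexicographic, and $\phi^{-1}(\mathcal H_0)$ is a Gr\"obner basis of $J_\Lambda$ with respect to $<$ whose elements are, up to sign, members of $\mathcal G$.

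It remains to upgrade $\phi^{-1}(\mathcal H_0)$ to the full set $\mathcal G$. Every binomial $x_{i\ell}x_{kj}-x_{ij}x_{k\ell}$ of $\mathcal G$ lies in $J_\Lambda=I_{G_M}$, since under the defining homomorphism of $I_{G_M}$ it maps to $s_it_\ell s_kt_j-s_it_js_kt_\ell=0$; and a set of ideal elements that contains a Gr\"obner basis is itself a Gr\"obner basis. Thus $\mathcal G$ is a quadratic Gr\"obner basis of $J_\Lambda$ with respect to $<$, and in particular $J_\Lambda$ is generated by the quadratic binomials in $\mathcal G$. The step I expect to require the most care --- and really the only place where anything nontrivial happens --- is the explicit ``diagonal permutation'' description of $G_M\cong G_{M_{\rm diag}}$ and its compatibility with the indexing of the variables $x_{ij}$: this is exactly what makes $\phi$ transport reverse lexicographic orders to reverse lexicographic orders and square binomials to square binomials, and hence what lets the diagonal case be imported verbatim. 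Everything else is bookkeeping with Proposition~\ref{toricbipartite} and the elementary Gr\"obner-basis facts just used.
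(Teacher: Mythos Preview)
Your proposal is correct and follows exactly the approach the paper indicates: the paper does not give a standalone proof of this proposition but simply records the two inputs---the graph isomorphism $G_M\cong G_{M_{\rm diag}}$ from \cite[Proposition~1.11]{SZ} and the quadratic reverse-lexicographic Gr\"obner basis for $J_{M_{\rm diag}}$ from \cite[Remark~11.11]{Stu}---and states the proposition as an immediate consequence. Your write-up carefully spells out the transport-along-isomorphism argument that the paper leaves implicit, including the observation that the isomorphism is induced by a single permutation of $[n]$ and therefore carries reverse lexicographic orders to reverse lexicographic orders and $4$-cycle binomials to $4$-cycle binomials.
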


Let $\ab = (a_1,\dots, a_s) \in \ZZ_{>0}^s$ such that $\sum_{i=1}^s a_i = n$,
and let  $\wb_\ab = (w_1,\ldots, w_n)$ be the second row of $M_\ab$.
Let $G_\ab=G_{M_\ab}$. In order to prove our main theorem, we need a quadratic Gr\"obner basis of $I_{G_\ab}$ 
with respect to a reverse lexicographic order defined as follows. 
Let $<$ be a reverse lexicographic order on $S=K[x_{ij} : \{u_i,v_j\} \in E(G_\ab)]$ induced by the ordering
of variables such that $x_{i j} > x_{k \ell}$ if either (i) $i <k$ or (ii) $i=k$ and $j < \ell$.

\begin{Proposition}\label{revlexbipartite}
Let $\ab = (a_1,\dots, a_s) \in \ZZ_{>0}^s$ such that $\sum_{i=1}^s a_i = n$ and $a_i \in \{1,2\}$ for $i \notin \{1,s\}$. 
Then the reduced Gr\"obner basis of $I_{G_\ab}$ with respect to the reverse lexicographic order $<$ is 
\[
{\mathcal G}=
\{ x_{i \ell} x_{k j} -  x_{i j} x_{k \ell} : 
\{u_i, v_\ell\}, \{u_i,v_j\} , \{u_k,v_j\},\{u_k,v_\ell \} \in E(G_\ab), i <k \mbox{ and } j < \ell\}. 
\]
\end{Proposition}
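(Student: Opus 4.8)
The plan is to verify the two conditions of Buchberger's criterion, phrased in the bipartite-graph language of Proposition~\ref{toricbipartite}: first, that $\mathcal{G}$ actually consists of elements of $I_{G_\ab}$ whose leading terms (under $<$) are the ``expected'' ones, and second, that the $S$-polynomial of any two elements of $\mathcal{G}$ reduces to zero modulo $\mathcal{G}$. For the first point, observe that for $i<k$ and $j<\ell$ with all four edges $\{u_i,v_\ell\},\{u_i,v_j\},\{u_k,v_j\},\{u_k,v_\ell\}$ present, the binomial $x_{i\ell}x_{kj}-x_{ij}x_{k\ell}$ lies in $I_{G_\ab}$ because both monomials map to $s_is_kt_jt_\ell$ under $\pi$; and by the definition of the reverse lexicographic order $<$ (variables ordered by row index first, then column index) the leading term is $x_{i\ell}x_{kj}$, since $x_{i\ell}>x_{ij}$ and the incomparable pair $(i\ell)$ vs $(kj)$ is resolved in favour of $x_{i\ell}$ as $i<k$. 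So each generator is a genuine quadric with a well-understood initial monomial; what remains is to show there are no further elements in the reduced Gröbner basis, i.e. that $\ini_<(I_{G_\ab})$ is generated by the monomials $x_{i\ell}x_{kj}$ above.

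The main work is the $S$-pair reduction. By Proposition~\ref{toricbipartite}, $I_{G_\ab}$ is generated by cycle binomials $f_C$, and a Gröbner basis is obtained from a subset of them; the content of the proposition is that every even cycle's binomial reduces modulo the quadratic $\mathcal{G}$. Equivalently, I would argue directly that the initial ideal is generated in degree two by showing: for any binomial $f=\xb^{\ab}-\xb^{\bb}\in I_{G_\ab}$ whose leading monomial $\xb^{\ab}$ is \emph{not} divisible by any $x_{i\ell}x_{kj}$ from $\mathcal{G}$, we must have $f=0$. The combinatorial heart is a normal-form / ``sorting'' argument: a monomial $\prod x_{i_pj_p}$ avoiding all the leading terms in $\mathcal{G}$ is characterised by a monotonicity condition on the sequence of its edges (roughly, after sorting the row-indices weakly increasingly, the column-indices are forced to be weakly increasing as well, so the monomial is uniquely determined by its $\pi$-image as a sorted "staircase" word). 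The only nontrivial input beyond pure combinatorics of staircases is that the edge set $E(G_\ab)$ is compatible with such sorting — and this is precisely where the hypotheses enter, via conditions \eqref{b_d_condition} and \eqref{b_d_condition2} on $\wb_\ab$: these guarantee that whenever $\{u_i,v_j\}$ and $\{u_k,v_\ell\}$ are edges with $i<k$, $j<\ell$, the "crossing" edges $\{u_i,v_\ell\}$ and $\{u_k,v_j\}$ needed to perform an exchange are also edges of $G_\ab$. Concretely, $\{u_i,v_\ell\}$ needs $w_i>w_\ell$, which follows from $w_i>w_j$ and $w_j\ge w_\ell$... — the delicate case analysis on how $i,j,k,\ell$ distribute among the blocks $I_1,\dots,I_s$ is exactly what the restriction $a_i\in\{1,2\}$ for $i\notin\{1,s\}$ is designed to make tractable.

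So the steps, in order, are: (1) confirm $\mathcal{G}\subset I_{G_\ab}$ and identify leading terms under $<$; (2) establish the key combinatorial lemma that $E(G_\ab)$ is "exchange-closed" in the sense above, using \eqref{b_d_condition} and \eqref{b_d_condition2} with a block-by-block case analysis; (3) deduce that a monomial of $S$ avoiding all monomials $x_{i\ell}x_{kj}$ (with $i<k$, $j<\ell$, all four edges present) is the unique monomial in its fibre of $\pi$, hence that $\mathcal{G}$ is a Gröbner basis; (4) check reducedness — each non-leading term $x_{ij}x_{k\ell}$ is itself not divisible by any leading monomial in $\mathcal{G}$, which is immediate since $i<k$ and $j<\ell$ put it in "sorted" position, and the leading monomials are minimal generators because no one divides another. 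I expect step (2), the exchange-closedness of $E(G_\ab)$ under the block hypothesis, to be the main obstacle: it is where one must actually use the structure of the $s$-block matching field rather than generic nonsense, and it requires carefully tracking which of $i,j,k,\ell$ lie in the first block $I_1$, the last block $I_s$, or an interior block of size $\le 2$, and verifying the needed weight inequalities in each configuration.
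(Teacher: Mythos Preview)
Your step (2) is false as stated, and this is a genuine gap.  Take $\ab=(2,2)$, so $\wb_\ab=(2,1,4,3)$ and the hypothesis of the proposition is satisfied (vacuously, since $s=2$).  Then $\{u_1,v_2\}$ and $\{u_3,v_4\}$ are edges ($w_1=2>1=w_2$ and $w_3=4>3=w_4$), with $1<3$ and $2<4$, yet the crossing $\{u_1,v_4\}$ is \emph{not} an edge since $w_1=2<3=w_4$.  The inequality you sketch, ``$w_i>w_\ell$ follows from $w_i>w_j$ and $w_j\ge w_\ell$'', fails because $j<\ell$ does not force $w_j\ge w_\ell$ for block diagonal weights.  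The same example also kills the reverse direction: $\{u_3,v_4\}$ and $\{u_4,v_1\}$ are edges with $3<4$, $1<4$, but the ``sorted'' partner $\{u_4,v_4\}$ is not an edge.  So $G_\ab$ is not exchange-closed in either sense, and step (3) cannot be deduced by a pure sorting argument: the standard monomial $x_{34}x_{41}$ is not sorted, yet it \emph{is} alone in its fibre.  The conclusion you want in step (3) is still true, but it is exactly the content of the proposition and needs a different argument.

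The paper's proof proceeds differently.  It uses Proposition~\ref{HibiRing} to know that $\mathcal{G}$ already generates $I_{G_\ab}$, then runs Buchberger's criterion: the only nontrivial $S$-pairs have degree $3$, and by Proposition~\ref{toricbipartite} any nonzero remainder must be $f_C$ for a $6$-cycle $C$.  One then analyses the $3\times 3$ submatrix of the adjacency matrix carrying $C$.  A short argument shows neither $\left(\begin{smallmatrix}1&0\\0&1\end{smallmatrix}\right)$ nor $\left(\begin{smallmatrix}0&1\\1&0\end{smallmatrix}\right)$ occurs as a $2\times 2$ submatrix, so $C$ has at least two chords; for all but one chord pattern, one of the resulting $4$-cycles has its initial monomial dividing the remainder, a contradiction.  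The single surviving pattern forces a chain of weight inequalities which, via condition~\eqref{b_d_condition}, pins down the relative positions of the six indices, and then condition~\eqref{b_d_condition2} (this is precisely where $a_i\in\{1,2\}$ enters) yields the final contradiction.  So the hypothesis is used not to make the graph exchange-closed, but to exclude one specific $6$-cycle configuration.
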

\begin{proof}
Let $G=G_\ab$. From Proposition~\ref{HibiRing}, $I_G$ is generated by ${\mathcal G}$. 
Note that the initial monomial of each $  x_{i \ell} x_{k j} -  x_{i j} x_{k \ell}$ is $x_{i \ell} x_{k j}$
with respect to $<$. 
Applying Buchberger's criterion \cite[Theorem 1.29]{binomialideals}, it is enough to show that 
the $S$-polynomial $S(f,g)$ of any two distinct binomials $f$ and $g$ in ${\mathcal G}$ reduces to 0 with respect to ${\mathcal G}$.
Note that the initial monomials of $f$ and $g$ are different.
From \cite[Lemma 1.27]{binomialideals}, if the initial monomials of $f$ and $g$ are relatively prime, then $S(f,g)$ reduces to $0$. 
Suppose that the initial monomials of $f$ and $g$ have exactly one common variable. 
Then the degree of $S(f,g)$ is three. 
Suppose that the remainder $h$ of $S(f,g)$ with respect to ${\mathcal G}$
is not zero.
By Proposition~\ref{toricbipartite}, we may assume that $h$ is of the form
\[
h=f_C=x_{i_1 j_1} x_{i_2 j_2} x_{i_3 j_3} - x_{i_1 j_3} x_{i_2 j_1} x_{i_3 j_{2}} \neq 0, 
\]
where $C=(i_1, j_1,i_2,j_2, i_3 ,j_3)$ is an even cycle of $G$ of length $6$. 
Let $A =(a_{ij}) $ be the $n\times n$ matrix where 
\[a_{ij}=\left\{
\begin{array}{cc}
1 & \mbox{if } \{u_i,v_j\} \in E(G),\\
0 &  \mbox{otherwise.}
\end{array}\right.\]
Then the cycle $C$ appears in $A$ as one of the following submatrices of $A$: 
\begin{equation}
\label{6mat}
\begin{pmatrix}
 & 1 & 1\\
1 & & 1\\
 1& 1&  
\end{pmatrix},
\begin{pmatrix}
1 & & 1\\
  &1 & 1\\
 1& 1&  
\end{pmatrix},
\begin{pmatrix}
 & 1 & 1\\
1 & 1&  \\
 1& &1 
\end{pmatrix},
\begin{pmatrix}
 1&   & 1\\
1 & 1& \\
 & 1& 1
\end{pmatrix},
\begin{pmatrix}
 1& 1 &  \\
  & 1& 1\\
 1& & 1
\end{pmatrix},
\begin{pmatrix}
 1& 1 &  \\
1 & & 1\\
  & 1& 1
\end{pmatrix}.
\end{equation}
If $B_1 = \begin{pmatrix} 1 & 0 \\ 0 & 1 \end{pmatrix}$ is a submatrix of $A$ 
corresponding to the $k_1$, $k_2$-th rows and the $k_3$, $k_4$-th columns of $A$, then we have 
$w_{k_1} > w_{k_3}$, $w_{k_2} > w_{k_4}$, $w_{k_1} \le w_{k_4}$ and $w_{k_2} \le w_{k_3}$, a contradiction. 
Hence $B_1$ is not a submatrix of $A$.
Similarly, $B_2=\begin{pmatrix} 0 & 1 \\ 1 & 0 \end{pmatrix}$ is not a submatrix of $A$. 
Thus it follows that, each matrix in (\ref{6mat}) contains at most one $0$,
and hence $C$ has at least two chords.
If $C$ has a chord at a position marked by $*$ in 
\[
\begin{pmatrix}
* & 1 & 1\\
1 & & 1\\
 1& 1& *
\end{pmatrix},
\begin{pmatrix}
1 &* & 1\\
* &1 & 1\\
 1& 1& *
\end{pmatrix},
\begin{pmatrix}
* & 1 & 1\\
1 & 1& * \\
 1& *&1 
\end{pmatrix},
\begin{pmatrix}
 1& *  & 1\\
1 & 1& *\\
 & 1& 1
\end{pmatrix},
\begin{pmatrix}
 1& 1 &  \\
 * & 1& 1\\
 1& * & 1
\end{pmatrix},
\begin{pmatrix}
 1& 1 &  \\
1 & * & 1\\
  & 1& 1
\end{pmatrix},
\]
then $C$ has a submatrix of one of $B=\begin{pmatrix} * & 1 \\ 1 & 1 \end{pmatrix}$ and $\begin{pmatrix} 1 & 1 \\ 1 & * \end{pmatrix}$.
Then $B$ corresponds to a cycle $C'$ of length 4 in $G$ and $f_{C'}$ belongs to ${\mathcal G}$. 
Moreover, the initial monomial of $f_{C'}$ divides one of the monomials of $h$. 
This contradicts the hypothesis that $h$ is a remainder with respect to ${\mathcal G}$. Thus we may assume that 
\begin{align*}
&\quad\;\begin{matrix}
\ m_1 & \ m_2 & \  m_3 
\end{matrix}\\
\begin{matrix}
\ell_1\\
\ell_2\\
\ell_3
\end{matrix}
&\begin{pmatrix}
\ \ 1 \  \  & \ \ 1 \  \ & \ \ 1 \  \  \\
1 & 0 & 1\\
 1 & 1& 1
\end{pmatrix},
\end{align*}
where $\ell_1 < \ell_2 < \ell_3$ and $m_1 < m_2 < m_3$
is a submatrix of $A$.
Then
\begin{eqnarray*}
\min\{w_{\ell_1},w_{\ell_3}\}  >  \max\{w_{m_1} , w_{m_2} , w_{m_3}\} \;\; \text{ and }\;\;
w_{m_2} \geq w_{\ell_2} > \max\{w_{m_1}  , w_{m_3}\}. 
\end{eqnarray*}
Hence we have
\[
w_{\ell_1}, w_{\ell_3} >w_{m_2} \ge w_{\ell_2} > w_{m_1}  , w_{m_3}.
\]
If $w_{\ell_1} > w_{\ell_3}$ (resp. $w_{m_1} > w_{m_3}$), 
then $w_{\ell_2} > w_{\ell_3}$ (resp. $w_{m_1} > w_{m_2}$) by condition \eqref{b_d_condition}.
This is a contradiction. Thus we have 
\[
w_{\ell_3} > w_{\ell_1} >w_{m_2} \ge w_{\ell_2} > w_{m_3}  > w_{m_1}.
\]

We now show that $m_1 < \ell_1 < \ell_2 < m_3 < \ell_3$.
Suppose that $(\ell_2 <) \ell_3 < m_3$. 
Since $w_{\ell_2} > w_{m_3}$, we have $w_{\ell_2} > w_{\ell_3}$ by condition \eqref{b_d_condition}, a contradiction. 
Suppose that $\ell_1 < m_3 < \ell_2$. 
Since $w_{\ell_1} > w_{\ell_2}$, we have $w_{m_3} > w_{\ell_2}$ by condition \eqref{b_d_condition}, a contradiction. 
Suppose that $$(m_2 <)\  m_3 < \ell_1 (< \ell_2).$$ 
Since $w_{m_2} > w_{\ell_2}$, we have $w_{m_3} > w_{\ell_1}$ by condition \eqref{b_d_condition}, a contradiction.
Thus $\ell_2 < m_3 < \ell_3$. 
Suppose that $\ell_2 < m_1 ( < m_3 )$. 
Since $w_{\ell_2} > w_{m_3}$, we have $w_{m_1} > w_{m_3}$, a contradiction.
Thus $m_1 < \ell_2 < m_3 < \ell_3$.
Suppose that $\ell_1 < m_1 < \ell_2$. 
Since $w_{\ell_1} > w_{\ell_2}$, we have $w_{m_1} > w_{\ell_2}$, a contradiction.
Therefore $m_1 < \ell_1 < \ell_2 < m_3 < \ell_3$.

Since $w_{m_1} < w_{\ell_1} > w_{\ell_2} > w_{m_3} < w_{\ell_3}$, this contradicts to condition \eqref{b_d_condition2}.
\end{proof}

\medskip


\section{Quadratic Gr\"{o}bner bases of $s$-block diagonal matching fields}\label{sec:qGB}

Recall that the matching field ideal $J_\Lambda$ of a matching field $\Lambda$ is the kernel of a ring homomorphism 
$$\psi_\Lambda : S=K[P_I : I \in  \Ib_{r,n}] \rightarrow R=K[x_{ij} : 1 \le i \le r, 1 \le j \le n]$$
defined by $\psi_\Lambda(P_I) = \sgn(\sigma) x_{\sigma(1) i_1} \cdots x_{\sigma(r) i_r} $, 
where $I=\{i_1,\ldots,i_r\} \in \Ib_{r,n}$ with $i_1 < \cdots < i_r$ and $\sigma = \Lambda(I)$. 
From now on, we identify a variable $P_I$ with 
\[
\begin{bmatrix}
i_1\\
i_2\\
i_3\\
\vdots\\
i_r
\end{bmatrix}
\mbox{ if } \Lambda(I) = {\rm id} \hspace{1cm}
\mbox{ and } \hspace{1cm}
\begin{bmatrix}
i_2\\
i_1\\
i_3\\
\vdots\\
i_r
\end{bmatrix}
\mbox{ if } \Lambda(I) =  (1 \ 2).
\]

\begin{Lemma}\label{binomialslemma}
Let $\Lambda_\ab$ be an $s$-block diagonal matching field of size $r \times n$ associated with $\ab \in \ZZ_{>0}^s$. Let 
\begin{eqnarray*}
&1 \le i_1 < i_2 < \cdots < i_r \le n,\\
&1 \le j_1 < j_2 < \cdots < j_r \le n,\\
&i_k' = \min \{ i_k, j_k\} \;\mbox{ for }\; k=1,\ldots,r,\\
&j_k' = \max \{ i_k, j_k\} \;\mbox{ for }\; k=1,\ldots,r.
\end{eqnarray*}
Then we have the following:
\begin{align}
\begin{bmatrix}
i_1\\
i_2\\
i_3\\
\vdots\\
i_r
\end{bmatrix},
\begin{bmatrix}
j_1\\
j_2\\
j_3\\
\vdots\\
j_r
\end{bmatrix} \in S
\mbox{ and } 
\begin{matrix}
i_1 < j_1
\end{matrix}
\ \Longrightarrow \ 
\begin{bmatrix}
i_1\\
i_2\\
i_3\\
\vdots\\
i_r
\end{bmatrix}
\begin{bmatrix}
j_1\\
j_2\\
j_3\\
\vdots\\
j_r
\end{bmatrix}
-
\begin{bmatrix}
i_1\\
i_2'\\
i_3'\\
\vdots\\
i_r'
\end{bmatrix}
\begin{bmatrix}
j_1\\
j_2'\\
j_3'\\
\vdots\\
j_r'
\end{bmatrix} \in J_{\Lambda_\ab}, \tag{i}
\end{align}
\begin{align}
\begin{bmatrix}
i_2\\
i_1\\
i_3\\
\vdots\\
i_r
\end{bmatrix},
\begin{bmatrix}
j_1\\
j_2\\
j_3\\
\vdots\\
j_r
\end{bmatrix}\in S
\mbox{ and } 
\begin{matrix}
i_2 < j_1
\end{matrix}
\ \Longrightarrow \ 
\begin{bmatrix}
i_2\\
i_1\\
i_3\\
\vdots\\
i_r
\end{bmatrix}
\begin{bmatrix}
j_1\\
j_2\\
j_3\\
\vdots\\
j_r
\end{bmatrix}
-
\begin{bmatrix}
i_2\\
i_1\\
i_3'\\
\vdots\\
i_r'
\end{bmatrix}
\begin{bmatrix}
j_1\\
j_2\\
j_3'\\
\vdots\\
j_r'
\end{bmatrix} \in J_{\Lambda_\ab},\tag{ii} 
\end{align}
\begin{align}
\begin{bmatrix}
i_1\\
i_2\\
i_3\\
\vdots\\
i_r
\end{bmatrix},
\begin{bmatrix}
j_2\\
j_1\\
j_3\\
\vdots\\
j_r
\end{bmatrix}\in S
\mbox{ and } 
\begin{matrix}
i_1 < j_2\\
i_2 < j_3
\end{matrix}
\ \Longrightarrow \ 
\begin{bmatrix}
i_1\\
i_2\\
i_3\\
\vdots\\
i_r
\end{bmatrix}
\begin{bmatrix}
j_2\\
j_1\\
j_3\\
\vdots\\
j_r
\end{bmatrix}
-
\begin{bmatrix}
i_1\\
i_2\\
i_3'\\
\vdots\\
i_r'
\end{bmatrix}
\begin{bmatrix}
j_2\\
j_1\\
j_3'\\
\vdots\\
j_r'
\end{bmatrix} \in J_{\Lambda_\ab},\tag{iii} 
\end{align}
\begin{align}
\begin{bmatrix}
i_1\\
i_2\\
i_3\\
\vdots\\
i_r
\end{bmatrix},
\begin{bmatrix}
j_2\\
j_1\\
j_3\\
\vdots\\
j_r
\end{bmatrix}\in S
\mbox{ and } 
\begin{matrix}
i_1 < j_2\\
j_3 \le i_2
\end{matrix}
\ \Longrightarrow \ 
\begin{bmatrix}
i_1\\
i_2\\
i_3\\
\vdots\\
i_r
\end{bmatrix}
\begin{bmatrix}
j_2\\
j_1\\
j_3\\
\vdots\\
j_r
\end{bmatrix}
-
\begin{bmatrix}
i_1\\
j_1\\
i_3'\\
\vdots\\
i_r'
\end{bmatrix}
\begin{bmatrix}
j_2\\
i_2\\
j_3'\\
\vdots\\
j_r'
\end{bmatrix} \in J_{\Lambda_\ab},\tag{iv} 
\end{align}
\begin{align}
\begin{bmatrix}
i_2\\
i_1\\
i_3\\
\vdots\\
i_r
\end{bmatrix},
\begin{bmatrix}
j_2\\
j_1\\
j_3\\
\vdots\\
j_r
\end{bmatrix} \in S
\mbox{ and } 
\begin{matrix}
i_2 < j_2
\end{matrix}
\ \Longrightarrow \ 
\begin{bmatrix}
i_2\\
i_1\\
i_3\\
\vdots\\
i_r
\end{bmatrix}
\begin{bmatrix}
j_2\\
j_1\\
j_3\\
\vdots\\
j_r
\end{bmatrix} 
-
\begin{bmatrix}
i_2\\
i_1\\
i_3'\\
\vdots\\
i_r'
\end{bmatrix}
\begin{bmatrix}
j_2\\
j_1\\
j_3'\\
\vdots\\
j_r'
\end{bmatrix}  \in J_{\Lambda_\ab},\tag{v} 
\end{align}
\begin{align}
\begin{bmatrix}
\ell_1\\
\ell_2\\
\ell_3\\
\vdots\\
\ell_r
\end{bmatrix},
\begin{bmatrix}
\ell_1\\
m_2\\
m_3\\
\vdots\\
m_r
\end{bmatrix}\in S
\mbox{ and } \ \ \ 
\begin{matrix}
\ell_k' = \min \{ \ell_k, m_k \}\\
m_k' = \max \{ \ell_k, m_k\}\\
\\
 \mbox{ for } k =2,3,\dots,r
\end{matrix}
\Longrightarrow \ 
\begin{bmatrix}
\ell_1\\
\ell_2\\
\ell_3\\
\vdots\\
\ell_r
\end{bmatrix}
\begin{bmatrix}
\ell_1\\
m_2\\
m_3\\
\vdots\\
m_r
\end{bmatrix}
-
\begin{bmatrix}
\ell_1\\
\ell_2'\\
\ell_3'\\
\vdots\\
\ell_r'
\end{bmatrix}
\begin{bmatrix}
\ell_1\\
m_2'\\
m_3'\\
\vdots\\
m_r'
\end{bmatrix} \in J_{\Lambda_\ab}, \tag{vi} 
\end{align}
\begin{align}
\begin{bmatrix}
\ell_1\\
\ell_2\\
\ell_3\\
\vdots\\
\ell_r
\end{bmatrix},
\begin{bmatrix}
m_1\\
m_2\\
m_3\\
\vdots\\
m_r
\end{bmatrix}\in S
\mbox{ and } \ \ \ 
\begin{matrix}
w_{\ell_1} > w_{m_2},\\
w_{m_1} > w_{\ell_2}\\
\ell_1 < m_1, \\
m_2 < \ell_2,\\
\ell_k \le m_k \mbox{ for } k \ge 3
\end{matrix}
\Longrightarrow \ 
\begin{bmatrix}
\ell_1\\
\ell_2\\
\ell_3\\
\vdots\\
\ell_r
\end{bmatrix}
\begin{bmatrix}
m_1\\
m_2\\
m_3\\
\vdots\\
m_r
\end{bmatrix}
-
\begin{bmatrix}
\ell_1\\
m_2\\
\ell_3\\
\vdots\\
\ell_r
\end{bmatrix}
\begin{bmatrix}
m_1\\
\ell_2\\
m_3\\
\vdots\\
m_r
\end{bmatrix} \in J_{\Lambda_\ab}. \tag{vii} 
\end{align}
\end{Lemma}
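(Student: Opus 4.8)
The plan is to verify each of the seven implications (i)--(vii) by exhibiting a suitable factorization of the images under $\pi_{\Lambda_\ab}$, reducing everything to the $2\times n$ case already understood via Proposition~\ref{HibiRing} and the combinatorial structure of the weight vector $\wb_\ab$. Concretely, for any pair of columns appearing in the statement, the binomial in question lies in $J_{\Lambda_\ab}$ precisely when $\pi_{\Lambda_\ab}$ sends both monomials to the same monomial in $R$. Since $\pi_{\Lambda_\ab}(P_I)=x_{\sigma(1)i_1}\cdots x_{\sigma(r)i_r}$ with $\sigma=\Lambda_\ab(I)$, the image of a product $P_I P_{I'}$ is determined by the multiset of pairs $\{(\text{row index},\text{column index})\}$, so I would track, row by row, which column indices occur with which row label on each side of the proposed binomial and check the multisets agree.

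First I would handle the ``sorting'' cases (i), (ii), (v), (vi): here the operation replacing $(i_k,j_k)$ by $(i_k',j_k')=(\min,\max)$ for $k\ge 2$ (or $k\ge 3$, or componentwise for the shared-first-entry case) is exactly the sorting relation, and the key point is that sorting the tails does not change which block the smallest surviving index lies in, hence does not change the value of $\Lambda_\ab$ on either resulting set; combined with the fact that row~$1$ and row~$2$ entries are untouched, the two monomials coincide. The condition $i_1<j_1$ (resp.\ $i_2<j_1$, $i_2<j_2$) guarantees that the new sets $\{i_1,i_2',\dots\}$ and $\{j_1,j_2',\dots\}$ are genuinely increasing and that $\Lambda_\ab$ still returns $(1\ 2)$ exactly where needed, using the characterization $\Lambda_\ab(I)=(1\ 2)\iff w_{i_1}<w_{i_2}$ together with property \eqref{b_d_condition}. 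Cases (iii) and (iv) are the ones where a $(1\ 2)$ appears on the right-hand factor: in (iii) the hypotheses $i_1<j_2$, $i_2<j_3$ ensure that the index patterns line up so that swapping only the tails suffices, whereas in (iv) the extra inequality $j_3\le i_2$ forces a genuine exchange of the second entries ($i_2\leftrightarrow j_1$ between the two factors), and I must check that the resulting sets are still valid elements of $\Ib_{r,n}$ with the correct $\Lambda_\ab$-values — this is where \eqref{b_d_condition} is used most delicately.

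The main obstacle I anticipate is case (vii). Here nothing is being sorted: the hypotheses $w_{\ell_1}>w_{m_2}$, $w_{m_1}>w_{\ell_2}$, $\ell_1<m_1$, $m_2<\ell_2$, $\ell_k\le m_k$ for $k\ge3$ encode precisely the situation of a length-$4$ cycle in the bipartite graph $G_\ab$ of Section~\ref{sec:graph}, and the claim is that exchanging the second entries $\ell_2\leftrightarrow m_2$ between the two factors keeps us inside $J_{\Lambda_\ab}$. The point is that $\Lambda_\ab$ evaluated on $\{\ell_1,m_2,\ell_3,\dots\}$ and on $\{m_1,\ell_2,m_3,\dots\}$ must still be computed, and one has to argue both that these are legitimate increasing sequences (so that $m_2$ really sits in the second slot of the first set, i.e.\ $\ell_1<m_2<\ell_3$, and similarly $m_1<\ell_2<m_3$) and that the first-block/min-index bookkeeping that controls whether $\Lambda_\ab$ returns $(1\ 2)$ or ${\rm id}$ is consistent on both sides; this forces me to trace through where $\ell_1,m_1,\ell_2,m_2$ fall among the blocks $I_k$, invoking \eqref{b_d_condition} (and for the borderline cases the restricted hypothesis $a_i\in\{1,2\}$ for $i\notin\{1,s\}$, i.e.\ \eqref{b_d_condition2}) to rule out the bad configurations — exactly as in the proof of Proposition~\ref{revlexbipartite}. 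Once (vii) is settled, the remaining items follow by the same, but lighter, case analysis, and the lemma is proved.
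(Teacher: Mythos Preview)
Your overall strategy --- check that the second monomial in each binomial is a legitimate variable of $S$ (correct increasing pattern and correct $\Lambda_\ab$-value), after which $\pi_{\Lambda_\ab}$-equality is immediate because the row-by-row multisets of column indices visibly agree --- is exactly the paper's approach. The paper streamlines this by stating the reduction up front: once the binomial lies in $S$, membership in $J_{\Lambda_\ab}$ is automatic, so everything comes down to the combinatorial check that each new column $[a,b,c,\dots]$ satisfies $w_a>w_b$ and $\max(a,b)<c<\cdots$, using a one-line lemma that $i_k'<i_{k+1}'$ and $j_k'<j_{k+1}'$ always hold.

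Where you go wrong is case (vii). The lemma is stated for \emph{arbitrary} $\ab\in\ZZ_{>0}^s$; the restriction $a_i\in\{1,2\}$ for $i\notin\{1,s\}$, i.e.\ condition \eqref{b_d_condition2}, is \emph{not} a hypothesis here and cannot be invoked. Nor is it needed: (vii) is in fact the \emph{easiest} case, not the hardest. The hypotheses $w_{\ell_1}>w_{m_2}$ and $w_{m_1}>w_{\ell_2}$ are already exactly the weight conditions required for the two new columns, and the position checks $m_2<\ell_3$ and $\ell_2<m_3$ follow in one line from $m_2<\ell_2<\ell_3$ and $\ell_2<\ell_3\le m_3$ (the last by $\ell_k\le m_k$ for $k\ge 3$). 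No block-location analysis, no \eqref{b_d_condition}, and certainly no \eqref{b_d_condition2} is required. Your pointer to Proposition~\ref{revlexbipartite} is a red herring: that proposition genuinely needs \eqref{b_d_condition2}, but it concerns the Gr\"obner basis of $I_{G_\ab}$ and plays no role in this lemma. The place where \eqref{b_d_condition} really is used is in (i) and (iv), to propagate a weight inequality $w_p>w_q$ to intermediate indices; you located (iv) correctly.
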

\begin{proof}
It is easy to see that, if a binomial $f$ appearing in (i) -- (vii) belongs to the polynomial ring $S$, 
then $\psi_\Lambda(f) = 0$ and hence $f \in J_{\Lambda_\ab}$. 
Thus, it is enough to show that the second monomial of any binomial appearing in (i) -- (vii) belongs to the polynomial ring $S$. 
First, we show that 
\begin{eqnarray}\label{daisyo}
i_k' < i_{k+1}' \mbox{ and } j_k' < j_{k+1}' \mbox{ for } k=1,2,\dots , r-1.
\end{eqnarray}
Suppose that $(i_k', j_k') = (i_k, j_k)$. Since $i_k' = i_k \le j_k$, we have $i_k' < i_{k+1}, j_{k+1}$. Hence $i_k' < i_{k+1}'$.
In addition, $j_k' = j_k  < j_{k+1} \le j_{k+1}'$. 
Suppose that $(i_k', j_k') = (j_k, i_k)$. Since $i_k' = j_k \le i_k$, we have $i_k' < i_{k+1}, j_{k+1}$. 
Hence $i_k' < i_{k+1}'$. In addition, $j_k' = i_k  < i_{k+1} \le j_{k+1}'$. 

\medskip

\noindent
(i) From \eqref{daisyo}, $i_1 = i_1' < i_2' < \cdots < i_r'$ and $j_1 =j_1' < j_2' < \cdots < j_r'$.
By the hypothesis, $w_{i_1} > w_{i_2}$ and $w_{j_1} > w_{j_2}$ 
If $(i_2', j_2') = (i_2, j_2)$, then  $w_{i_1} > w_{i_2'}$ and $w_{j_1} > w_{j_2'}$ is trivial.
Suppose that $(i_2', j_2') = (j_2, i_2)$. Then $i_1 < j_1 < j_2 = i_2' \le j_2' = i_2$. Hence $w_{i_1} > w_{i_2'}$ and $w_{j_1} > w_{j_2'}$.

\medskip

\noindent
(ii) Since $i_2< j_1 < j_2$, we have $(i_2', j_2') = (i_2, j_2)$.
From \eqref{daisyo}, $i_1  < i_2 =i_2' < i_3' < \cdots < i_r'$ and $j_1 < j_2 =j_2' < \cdots < j_r'$. 
By the hypothesis, $w_{i_1} < w_{i_2}$, $w_{j_1} > w_{j_2}$.

\medskip

\noindent
(iii) From \eqref{daisyo}, $j_1 < j_2 (< j_3) \le j_3' < \cdots < j_r'$ and $i_3' < \cdots < i_r'$. 
Since $i_2 < j_3$, we have $i_1< i_2 < i_3'$. By the hypothesis, $w_{i_1} > w_{i_2}$, $w_{j_1} < w_{j_2}$,

\medskip

\noindent
(iv) From \eqref{daisyo}, $i_3' < \cdots < i_r'$ and $j_3' < \cdots < j_r'$. 
By the hypothesis, $w_{i_1} > w_{i_2}$, $w_{j_1} < w_{j_2}$, and $i_1 < j_2 < j_3 \le i_2 < i_3$. 
If $i_1 \le j_1 (< j_2 < i_2)$, then $w_{j_1} > w_{j_2}$, a contradiction. 
Hence $$j_1 < i_1 (< j_2 < j_3 \le i_2 < i_3).$$ 
Thus $j_1 < i_1 < i_3'$ and $j_2 < i_2 < i_3 = j_3'$. Since $w_{i_1} > w_{i_2}$, we have $w_{i_1} > w_{j_2}$. 
Suppose that $w_{i_1} \le w_{j_1}$. It then follows that $w_{i_1} \le w_{j_1} <  w_{j_2}$, a contradiction. Hence $w_{i_1} > w_{j_1}$. 

\medskip

\noindent
(v) From \eqref{daisyo}, $i_3' < \cdots < i_r'$ and $j_3' < \cdots < j_r'$.  
By the hypothesis, $w_{i_1} < w_{i_2}$, $w_{j_1} < w_{j_2}$, and $i_2 < j_2< j_3$. Then $i_1< i_2 <  i_3'$ and $j_1 < j_2 < j_3 \le j_3'$.

\medskip

\noindent
(vi) By the hypothesis, $w_{\ell_1} > w_{\ell_2'}$ and $w_{\ell_1} > w_{m_2'}$ is trivial.
By the same argument in the proof of \eqref{daisyo}, 
it follows that $\ell_1 , \ell_2' < \ell_3' < \cdots < \ell_r'$ and $\ell_1 , m_2' < m_3' < \cdots < m_r'$.  

\medskip

\noindent
(vii) By the hypothesis, $w_{\ell_1} > w_{m_2}$ and $w_{m_1} > w_{\ell_2}$. Moreover, since 
$$\begin{bmatrix}
\ell_1\\
\ell_2\\
\ell_3\\
\vdots\\
\ell_r
\end{bmatrix}
\mbox{ and }
\begin{bmatrix}
m_1\\
m_2\\
m_3\\
\vdots\\
m_r
\end{bmatrix}$$
belong to $S$, $\ell_2 < \ell_3$ and $m_2 < m_3$. Hence $m_2 < \ell_2 < \ell_3$ and $\ell_2 < \ell_3 \le m_3$.

\medskip

Therefore, the second monomial of any binomial appearing in (i) -- (vii) belongs to the polynomial ring $S$. 
\end{proof}

Let $<$ be a reverse lexicographic order induced by the ordering of variables such that 
\begin{equation}
\label{revlex_rule}
\begin{bmatrix}
i_1\\
i_2\\
\vdots\\
i_r
\end{bmatrix}
>
\begin{bmatrix}
j_1\\
j_2\\
\vdots\\
j_r
\end{bmatrix}
\Longleftrightarrow
i_k = j_k \mbox{ for } k=1,2,\dots,t-1 \mbox{ and } i_{t} < j_{t}.
\end{equation}
The following theorem is the first main result of the present paper: 
\begin{Theorem}\label{thm:quad}
Let $\Lambda_\ab$ be an $s$-block diagonal matching field of size $r \times n$ associated with $\ab = (a_1,\dots, a_s) \in \ZZ_{>0}^s$ 
such that $\sum_{i=1}^s a_i = n$ and $a_i \in \{1,2\}$ for $i \notin \{1,s\}$. 
Let ${\mathcal G}_\ab$ be the set of all binomials appearing in Lemma {\rm \ref{binomialslemma}} {\rm (i) -- (vii)}.
Then ${\mathcal G}_\ab$ is a quadratic Gr\"{o}bner basis of $J_{\Lambda_\ab}$ with respect to the reverse lexicographic order $<$.
\end{Theorem}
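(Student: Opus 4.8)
The plan is to prove Theorem~\ref{thm:quad} by the standard ``unique normal form'' criterion for Gr\"obner bases of toric ideals. As preliminary bookkeeping I would record that $\mathcal{G}_\ab\subseteq J_{\Lambda_\ab}$, which is precisely Lemma~\ref{binomialslemma}, and that the initial monomial with respect to $<$ of each binomial in (i)--(vii) is its first displayed monomial. The latter is a short computation with the reverse lexicographic order \eqref{revlex_rule}: in each of the seven moves the four variables $A,B,A',B'$ involved satisfy $A'\geq A>B\geq B'$ with the first monomial being the ``middle'' product $AB$ and the second the ``extreme'' product $A'B'$, and in reverse lexicographic order the product of the smallest and largest variables is the smaller monomial, so $AB$ is the leading term. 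Granting this, it suffices to prove the statement $(\ast)$: if $u,v$ are monomials of $S$ with $\pi_{\Lambda_\ab}(u)=\pi_{\Lambda_\ab}(v)$ and neither $u$ nor $v$ is divisible by $\ini_<(g)$ for any $g\in\mathcal{G}_\ab$, then $u=v$. Indeed, if $\mathcal{G}_\ab$ were not a Gr\"obner basis then, since $J_{\Lambda_\ab}$ is generated by binomials, one could pick a binomial $f=u-v\in J_{\Lambda_\ab}$ with $\ini_<(f)=u\notin\langle\ini_<(\mathcal{G}_\ab)\rangle$ and trailing term $v$ minimal for $<$; reducing $v$ by a suitable element of $\mathcal{G}_\ab$ would yield another such binomial with a strictly smaller trailing term, so $v$ is standard as well, whence $(\ast)$ forces $u=v$, contradicting $f\neq0$.

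To establish $(\ast)$ I would encode a monomial $u=P_{I_1}\cdots P_{I_d}$ as the $r\times d$ array whose columns are the column vectors identified with $P_{I_1},\dots,P_{I_d}$, arranged in weakly $<$-decreasing order; then $\pi_{\Lambda_\ab}(u)$ records, for each of the $r$ rows, the multiset of its entries. The core step is to show that standardness forces this array into a ``sorted'' shape. Each of the seven families of Lemma~\ref{binomialslemma} forbids one specific unsorted pair of columns; the families are distinguished by the permutation types ($\mathrm{id}$ or $(1\ 2)$) of the two columns, by whether their top entries coincide (families (i) versus (vi)), and by whether entries in rows $1,2$ must be exchanged between the two columns (the split between (iii) and (iv), together with (vii), which is the $r$-variable version of the nontrivial quadratic relation of the $2\times n$ case, Proposition~\ref{revlexbipartite}). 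One must check that together they rule out \emph{every} unsorted pair; carrying this out using the conditions \eqref{b_d_condition} and \eqref{b_d_condition2} --- the latter being exactly the hypothesis $a_i\in\{1,2\}$ for $i\notin\{1,s\}$ --- should yield that in a standard monomial the two-row array formed by rows $1$ and $2$ is standard for the $2\times n$ matching field ideal $I_{G_\ab}$, while each of rows $3,\dots,r$ is weakly increasing along the columns, subject to the column inequalities $\ell_2<\ell_3<\cdots<\ell_r$.

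It then remains to recover a standard monomial from its content. Rows $1$ and $2$ are determined by the corresponding part of $\pi_{\Lambda_\ab}(u)$, because by Proposition~\ref{revlexbipartite} the displayed set is a Gr\"obner basis of $I_{G_\ab}$, so standard monomials of $I_{G_\ab}$ are in bijection with their contents. Once rows $1$ and $2$ are fixed, row $3$ is forced --- being the weakly increasing rearrangement of its content multiset compatible with the column inequalities, hence the sorted one threaded through the already-determined columns --- and likewise rows $4,\dots,r$ in turn. Thus $u$ is recovered from $\pi_{\Lambda_\ab}(u)$, so $u=v$, and $(\ast)$ follows.

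The hard part will be the combinatorics of the second paragraph: proving that the seven quadratic families are ``complete'', i.e.\ that any array which is not sorted is divisible by one of their leading monomials, and that the resulting class of sorted arrays is narrow enough to be recovered from its row contents. This is exactly where the block-diagonal hypothesis is indispensable --- for general $s$-block diagonal matching fields $J_{\Lambda_\ab}$ need not even be quadratically generated (\cite[Example 3.12]{MS}) --- and it is used through the constraints \eqref{b_d_condition} and \eqref{b_d_condition2} on how the two permutation types of columns can interleave. One could instead try to verify Buchberger's criterion directly for all pairs drawn from the seven families, but organizing the resulting degree-$3$ $S$-polynomials by type combination seems no easier than the sorting analysis above.
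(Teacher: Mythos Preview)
Your plan is correct and follows essentially the same route as the paper's proof: reduce via the standard-monomial criterion, use families (i)--(vi) to force rows $3,\dots,r$ of the array to be sorted (hence determined by their row contents), and then invoke Proposition~\ref{revlexbipartite} on the remaining two-row part, lifting the resulting quadratic relation back via family (vii). The only organizational difference is that the paper does not use \eqref{b_d_condition} and \eqref{b_d_condition2} directly inside this proof; the block-diagonal hypothesis enters solely through Proposition~\ref{revlexbipartite}, so your ``completeness'' check for rows $\ge 3$ needs no case analysis on $\wb_\ab$ --- once two columns are ordered by \eqref{revlex_rule}, one of (i)--(vi) applies purely by comparing their permutation types and whether their top entries coincide.
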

\begin{proof}
Suppose that ${\mathcal G}_\ab$ is not a Gr\"{o}bner basis of $J_{\Lambda_\ab}$.
By \cite[Theorem 3.11]{binomialideals}, there exists an irreducible homogeneous binomial $u-v \in J_{\Lambda_\ab}$
such that neither $u$ nor $v$ belongs to the monomial ideal generated by
the initial monomials of the binomials in ${\mathcal G}_\ab$.
Note that the initial monomial of each binomial in ${\mathcal G}_\ab$
is the first monomial in Lemma \ref{binomialslemma} (if the polynomial is not zero).
Let
\[
u=
\begin{bmatrix}
i_{11}\\
i_{12}\\
\vdots\\
i_{1r}
\end{bmatrix}
\begin{bmatrix}
i_{21}\\
i_{22}\\
\vdots\\
i_{2r}
\end{bmatrix}
\cdots
\begin{bmatrix}
i_{d1}\\
i_{d2}\\
\vdots\\
i_{dr}
\end{bmatrix}
\ \text{ and }\ 
v=
\begin{bmatrix}
j_{11}\\
j_{12}\\
\vdots\\
j_{1r}
\end{bmatrix}
\begin{bmatrix}
j_{21}\\
j_{22}\\
\vdots\\
j_{2r}
\end{bmatrix}
\cdots
\begin{bmatrix}
j_{d1}\\
j_{d2}\\
\vdots\\
j_{dr}
\end{bmatrix},
\]
where 
\[
\begin{bmatrix}
i_{11}\\
i_{12}\\
\vdots\\
i_{1r}
\end{bmatrix}
>
\begin{bmatrix}
i_{21}\\
i_{22}\\
\vdots\\
i_{2r}
\end{bmatrix}
> \cdots >
\begin{bmatrix}
i_{d1}\\
i_{d2}\\
\vdots\\
i_{dr}
\end{bmatrix}
\mbox{ and }
\begin{bmatrix}
j_{11}\\
j_{12}\\
\vdots\\
j_{1r}
\end{bmatrix}
>
\begin{bmatrix}
j_{21}\\
j_{22}\\
\vdots\\
j_{2r}
\end{bmatrix}
> \cdots >
\begin{bmatrix}
j_{d1}\\
j_{d2}\\
\vdots\\
j_{dr}
\end{bmatrix}.
\]
If $i_{\mu k} > i_{\eta k}$ for some $3 \le k \le r$ and $1 \le \mu < \eta \le d$, then 
$u$ is divisible by the initial monomial of a binomial appearing in Lemma \ref{binomialslemma} (i) -- (vi). Thus we may assume that
\[ i_{1 k} \le \cdots \le i_{d k} \mbox{ and } j_{1 k} \le \cdots \le j_{d k} \] for all $k \neq 2$.
Since $u-v$ belongs to $J_{\Lambda_\ab}$, it follows that $i_{\mu k } = j_{\mu k }$ for any $1 \le \mu \le d$ and $k\neq 2$.
If $i_{\mu 2 } = j_{\mu 2 }$  for any $1 \le \mu \le d$, then $u-v=0$, a contradiction. Hence
\[
\begin{bmatrix}
i_{11}\\
i_{12}
\end{bmatrix}
\begin{bmatrix}
i_{21}\\
i_{22}
\end{bmatrix}
\cdots
\begin{bmatrix}
i_{d1}\\
i_{d2}
\end{bmatrix}
-
\begin{bmatrix}
i_{11}\\
j_{12}
\end{bmatrix}
\begin{bmatrix}
i_{21}\\
j_{22}
\end{bmatrix}
\cdots
\begin{bmatrix}
i_{d1}\\
j_{d2}
\end{bmatrix}
\]
is a nonzero binomial in $I_{G_\ab}$. We may assume that $u$ is the initial monomial of $u-v$. 
From Proposition \ref{revlexbipartite}, there exists a binomial 
\[
\begin{bmatrix}
i_{\mu 1}\\
i_{\mu 2}
\end{bmatrix}
\begin{bmatrix}
i_{\eta 1}\\
i_{\eta 2}
\end{bmatrix}
-
\begin{bmatrix}
i_{\mu 1}\\
i_{\eta 2}
\end{bmatrix}
\begin{bmatrix}
i_{\eta 1}\\
i_{\mu 2}
\end{bmatrix}
\in I_{G_\ab},
\]
where $i_{\mu 1} < i_{\eta 1}$ and $i_{\mu 2} > i_{\eta 2}$. Then 
$\begin{bmatrix}
i_{\mu 1}\\
i_{\mu 2}\\
\vdots\\
i_{\mu r}
\end{bmatrix}
\begin{bmatrix}
i_{\eta 1}\\
i_{\eta 2}\\
\vdots\\
i_{\eta  r}
\end{bmatrix}$
satisfies the condition in Lemma~\ref{binomialslemma} (vii), and hence 
$f=
\begin{bmatrix}
i_{\mu 1}\\
i_{\mu 2}\\
i_{\mu 3}\\
\vdots\\
i_{\mu r}
\end{bmatrix}
\begin{bmatrix}
i_{\eta 1}\\
i_{\eta 2}\\
i_{\eta 3}\\
\vdots\\
i_{\eta  r}
\end{bmatrix}
-
\begin{bmatrix}
i_{\mu 1}\\
i_{\eta 2}\\
i_{\mu 3}\\
\vdots\\
i_{\mu r}
\end{bmatrix}
\begin{bmatrix}
i_{\eta 1}\\
i_{\mu 2}\\
i_{\eta 3}\\
\vdots\\
i_{\eta  r}
\end{bmatrix}$
is a binomial belonging to ${\mathcal G}_\ab$ whose initial monomial is the first monomial.
Since the first monomial of $f$ divides $u$, this is a contradiction. 
\end{proof}

\medskip


\section{Toric degenerations associated to $s$-block diagonal matching fields}\label{sec:toric_deg}

In this section, we provide a new family of toric degenerations of $\Gr(r,n)$ (Corollary~\ref{cor:toric_deg}). 
For this goal, we recall the notion of SAGBI bases for the Pl\"ucker algebra.

\begin{Definition}[SAGBI bases for the Pl\"ucker algebra]
We say that the generating set $\{\det(\xb_I) : I \in \Ib_{r,n}\} \subset R$ of the Pl\"ucker algebra $\Ac_{r,n}$ 
is a {\em SAGBI basis with respect to a weight matrix $M \in \RR^{r \times n}$} if $\ini_M(\Ac_{r,n})=K[\ini_M(\det(\xb_I)) : I \in \Ib_{r,n}]$, 
where $\ini_M(\Ac_{r,n})=K[\ini_M(f) : f \in \Ac_{r,n}]$. 
\end{Definition}

\begin{Theorem}[{\cite[Theorem 11.4]{Stu}}]\label{thm:enough}
The generating set $\{\det(\xb_I) : I \in \Ib_{r,n}\}$ of the Pl\"ucker algebra $\Ac_{r,n}$ 
is a SAGBI basis with respect to a weight matrix $M \in \RR^{r \times n}$ 
if and only if $J_\Lambda=\ini_{w_M}(I_{r,n})$ holds, where $w_M$ is the weight vector on $S$ induced by $M$ 
and $\Lambda$ is the coherent matching field induced by $M$. 
\end{Theorem}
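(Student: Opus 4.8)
This is \cite[Theorem 11.4]{Stu}, an instance of the general SAGBI-basis criterion; here is how I would prove it in the present setting. Write $I=I_{r,n}$ and $\psi(P_I)=\det(\xb_I)$, and recall $\ini_M(\det(\xb_I))=\xb_{\Lambda(I)}=\pi_\Lambda(P_I)$ and that the weight vector $w_M$ on $S$ is the one with $w_M(P_I)=$ the $M$-weight of the monomial $\xb_{\Lambda(I)}$. The basic observation is that $\ini_M$ is multiplicative on the domain $R$, so $\ini_M(\psi(P^\alpha))=\pi_\Lambda(P^\alpha)$ is a monomial of $M$-weight $w_M(P^\alpha)$, and hence for any $f=\sum_\alpha c_\alpha P^\alpha\in S$ the component of $\psi(f)$ of lowest $M$-weight $w_M(f)$ equals $\pi_\Lambda(\ini_{w_M}(f))$, while all other terms of $\psi(f)$ have strictly higher weight. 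Applying this to $f\in I$ (so $\psi(f)=0$) yields $\pi_\Lambda(\ini_{w_M}(f))=0$, i.e. $\ini_{w_M}(f)\in J_\Lambda$; since $J_\Lambda$ is an ideal, the inclusion $\ini_{w_M}(I_{r,n})\subseteq J_\Lambda$ holds unconditionally, and the content of the theorem is that the SAGBI property is equivalent to the reverse inclusion.

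Two reduction moves power the equivalence. \textbf{(A)} If $f\in S$ has $\ini_{w_M}(f)\in\ini_{w_M}(I)$, then --- by the standard fact that a $w_M$-homogeneous element of an initial ideal is itself an initial form --- there is $h\in I$ with $\ini_{w_M}(h)=\ini_{w_M}(f)$, and replacing $f$ by $f-h$ leaves $\psi(f)$ unchanged while strictly raising the $w_M$-order. \textbf{(B)} If $0\neq a\in\Ac_{r,n}$ has $\ini_M(a)\in K[\xb_{\Lambda(I)}]$, write $\ini_M(a)=\pi_\Lambda(g)$ with $g\in S$ a $w_M$-homogeneous polynomial of the corresponding weight; then replacing $a$ by $a-\psi(g)$ strictly raises the $M$-order (the two initial forms cancel). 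Since $M_\ab$, hence $w_M$, has nonnegative entries and $\psi$ is homogeneous, $M$- and $w_M$-weights are bounded above on each graded component, so iterating (A) or (B) must terminate. Now, if $J_\Lambda=\ini_{w_M}(I)$: for $a\in\Ac_{r,n}$ choose homogeneous $f$ with $\psi(f)=a$; while $\ini_{w_M}(f)\in J_\Lambda=\ini_{w_M}(I)$ apply (A) (this does not disturb $a$), so after finitely many steps $\ini_{w_M}(f)\notin J_\Lambda$, and then $\ini_M(a)=\pi_\Lambda(\ini_{w_M}(f))\in K[\xb_{\Lambda(I)}]$; hence $\ini_M(\Ac_{r,n})=K[\xb_{\Lambda(I)}]$. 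Conversely, if the $\det(\xb_I)$ form a SAGBI basis: $J_\Lambda$ is $w_M$-homogeneous (its binomial generators pair monomials with equal $\pi_\Lambda$-image, hence equal $w_M$-weight), so it suffices to place each $w_M$-homogeneous $g\in J_\Lambda$ in $\ini_{w_M}(I)$; applying (B) repeatedly to $\psi(g)$ --- legitimate at each stage because the SAGBI hypothesis keeps the relevant initial forms in $K[\xb_{\Lambda(I)}]$ --- produces $g_1,\dots,g_N$ with $w_M(g)<w_M(g_1)<\cdots<w_M(g_N)$ and $g-g_1-\cdots-g_N\in I$, so $g=\ini_{w_M}(g-g_1-\cdots-g_N)\in\ini_{w_M}(I)$. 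Combined with the free inclusion, this gives $J_\Lambda=\ini_{w_M}(I)$.

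The main obstacle is the termination of these reduction loops, which is exactly where the positivity of the grading is indispensable (all $\det(\xb_I)$ are homogeneous of the same degree and $M_\ab$ has nonnegative entries, so weights cannot increase without bound on a fixed graded piece); a secondary but real nuisance is keeping the lowest-versus-highest-weight conventions for $\ini_M$ on $R$ and $\ini_{w_M}$ on $S$ consistent throughout the two reduction moves. Once these are pinned down, the argument above is routine.
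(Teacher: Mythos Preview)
The paper does not prove this statement; it is quoted verbatim as \cite[Theorem~11.4]{Stu} and used as a black box in the proof of Theorem~\ref{thm:toric_deg}. So there is no ``paper's own proof'' to compare against. Your argument is a correct sketch of the standard subduction/lifting proof of the SAGBI criterion, specialized to the Pl\"ucker setting: the free inclusion $\ini_{w_M}(I_{r,n})\subseteq J_\Lambda$ via the compatibility $\ini_M\circ\psi=\pi_\Lambda$ on monomials, and the two reduction loops (A) and (B) giving the two implications.

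One small imprecision: you invoke nonnegativity of the entries of $M_\ab$ to guarantee termination, but the theorem is stated for an arbitrary weight matrix $M$ inducing a coherent matching field, and in any case nonnegativity is irrelevant here. Termination holds simply because the procedures in (A) and (B) preserve the ordinary degree (you can and should choose $h$ and the $g_i$ homogeneous of the same degree), and in a fixed graded piece of $S$ or $R$ there are only finitely many monomials and hence only finitely many possible $w_M$- or $M$-weights. With that remark, your proof goes through for general $M$ as stated.
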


The following is the second main result of the present paper and 
the essential statement for the existence of toric degenerations arising from $s$-block diagonal matching fields with certain conditions. 
\begin{Theorem}\label{thm:toric_deg}
Let $\ab = (a_1,\dots, a_s) \in \ZZ_{>0}^s$ such that $\sum_{i=1}^s a_i = n$ and $a_i \in \{1,2\}$ for $i \notin \{1,s\}$. 
Then the generating set $\{\det(\xb_I) : I \in \Ib_{r,n}\}$ forms a SAGBI basis for the Pl\"ucker algebra $\Ac_{r,n}$ 
with respect to the weight matrix $M_\ab$ associated to the block diagonal matching field $\Lambda_\ab$. 
\end{Theorem}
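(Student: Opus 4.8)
The plan is to invoke Theorem~\ref{thm:enough}: it suffices to show that $J_{\Lambda_\ab} = \ini_{w_{M_\ab}}(I_{r,n})$, where $w_{M_\ab}$ is the weight vector on $S$ induced by $M_\ab$. The inclusion $\ini_{w_{M_\ab}}(I_{r,n}) \subseteq J_{\Lambda_\ab}$ is the easy direction and essentially formal: if $f \in I_{r,n}$, then $\psi(f) = 0$, and comparing lowest-weight terms under $M_\ab$ shows that $\pi_{\Lambda_\ab}(\ini_{w_{M_\ab}}(f)) = 0$ (using that $\ini_{M_\ab}(\det(\xb_I)) = \pi_{\Lambda_\ab}(P_I)$ by coherence of $\Lambda_\ab$), so $\ini_{w_{M_\ab}}(f) \in J_{\Lambda_\ab}$; since $J_{\Lambda_\ab}$ is generated by the initial forms of a generating set, and $\ini_{w_{M_\ab}}(I_{r,n})$ is generated by the $\ini_{w_{M_\ab}}(f)$ for $f \in I_{r,n}$, this inclusion follows. (Alternatively, this inclusion holds for any coherent matching field and is standard.)

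For the reverse inclusion $J_{\Lambda_\ab} \subseteq \ini_{w_{M_\ab}}(I_{r,n})$, the strategy is to use the quadratic Gr\"obner basis $\Gc_\ab$ of $J_{\Lambda_\ab}$ from Theorem~\ref{thm:quad}, together with a dimension/Hilbert-function comparison. Concretely, I would argue as follows. Both $J_{\Lambda_\ab}$ and $\ini_{w_{M_\ab}}(I_{r,n})$ are homogeneous ideals of $S$, and we already have $\ini_{w_{M_\ab}}(I_{r,n}) \subseteq J_{\Lambda_\ab}$. Since $S/I_{r,n} \cong \Ac_{r,n}$ and passing to an initial ideal with respect to a weight preserves the Hilbert function, $S/\ini_{w_{M_\ab}}(I_{r,n})$ and $S/I_{r,n}$ have the same Hilbert function. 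On the other hand $S/J_{\Lambda_\ab}$ is (isomorphic to) the toric ring $K[\xb_{\Lambda_\ab(I)} : I \in \Ib_{r,n}]$, which is a domain of the same Krull dimension as $\Ac_{r,n}$, namely $r(n-r)+1$. An inclusion of homogeneous ideals between two graded quotients of the same dimension that are both, say, of the same degree (Pl\"ucker-graded) forces equality once one checks the Hilbert functions agree; since the bigger ideal $J_{\Lambda_\ab}$ has a quotient of the same dimension as the smaller one, and the smaller one's quotient has the Hilbert function of $\Ac_{r,n}$, a standard argument (e.g.\ comparing degrees as projective varieties, or noting a proper containment would strictly drop the Hilbert function) gives $\ini_{w_{M_\ab}}(I_{r,n}) = J_{\Lambda_\ab}$.

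An equivalent and perhaps cleaner route, which avoids the Hilbert-function bookkeeping, is the ``lifting'' argument used for SAGBI bases: one shows directly that every element of the generating set $\Gc_\ab$ of $J_{\Lambda_\ab}$ lifts to an element of $I_{r,n}$ whose $w_{M_\ab}$-initial form is that generator. For each quadratic binomial $P_A P_B - P_{A'}P_{B'}$ in $\Gc_\ab$, one exhibits a quadratic Pl\"ucker relation (a three-term or straightening relation among $2\times 2$ minors embedded into the $r\times r$ ones) whose lowest-weight part under $M_\ab$ is exactly $\pi_{\Lambda_\ab}$ of that binomial; this is the content one reads off from the explicit descriptions in Lemma~\ref{binomialslemma}, since each such binomial was built from a genuine exchange of indices. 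Having such lifts for a Gr\"obner basis of $J_{\Lambda_\ab}$ is, by the subduction/Gr\"obner-basis criterion for SAGBI bases, exactly enough to conclude $J_{\Lambda_\ab} = \ini_{w_{M_\ab}}(I_{r,n})$, hence the SAGBI property by Theorem~\ref{thm:enough}.

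The main obstacle will be the reverse inclusion, and within it, the delicate point is either (a) making the Hilbert-function/dimension comparison fully rigorous — one must know that $S/J_{\Lambda_\ab}$ and $\Ac_{r,n}$ genuinely have the same Hilbert function, not merely the same dimension, which is where the quadratic Gr\"obner basis and the standard fact that matching fields give equinumerous standard monomial bases enter — or (b) in the lifting approach, producing the explicit Pl\"ucker relation lifting each of the seven families (i)--(vii) of generators and verifying its initial form, particularly for the ``mixed'' cases (iv) and (vii) where the two columns are combined in a non-obvious way. I would favor the Hilbert-function argument since Theorem~\ref{thm:quad} already hands us the combinatorics of the initial ideal for free, making the count of standard monomials of $S/J_{\Lambda_\ab}$ available, and this count matches $\dim_K (\Ac_{r,n})_d$ for all $d$.
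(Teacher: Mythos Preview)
Your reduction via Theorem~\ref{thm:enough} to the equality $J_{\Lambda_\ab} = \ini_{w_{M_\ab}}(I_{r,n})$, and your handling of the inclusion $\ini_{w_{M_\ab}}(I_{r,n}) \subseteq J_{\Lambda_\ab}$, match the paper. The gap is in the reverse inclusion. Your route~(a) ultimately rests on the claim that $S/J_{\Lambda_\ab}$ and $\Ac_{r,n}$ have the same Hilbert function, which you support by a ``standard fact that matching fields give equinumerous standard monomial bases''. That fact is not standard for block-diagonal (or general coherent) matching fields; it is essentially equivalent to the SAGBI property you are trying to prove. Knowing $\ini_<(J_{\Lambda_\ab})$ explicitly from Theorem~\ref{thm:quad} lets you \emph{compute} the Hilbert function of $S/J_{\Lambda_\ab}$, but gives no a~priori reason the answer agrees with that of $\Ac_{r,n}$; and Krull-dimension equality alone is far too weak to force equality of Hilbert functions under an inclusion of ideals.

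The paper closes this gap by exploiting quadratic generation more sharply. Since $J_{\Lambda_\ab}$ is generated in degree~$2$ (Theorem~\ref{thm:quad}), to obtain $J_{\Lambda_\ab} \subseteq \ini_{w_{M_\ab}}(I_{r,n})$ it suffices that $[J_{\Lambda_\ab}]_2 = [\ini_{w_{M_\ab}}(I_{r,n})]_2$, and since one inclusion is already known this reduces to the single numerical equality $\dim_K [A_\ab]_2 = \dim_K [S/\ini_{w_{M_\ab}}(I_{r,n})]_2 = \dim_K [\Ac_{r,n}]_2 = \dim_K [A_0]_2$, the last identification using the known diagonal case. The substantive missing ingredient in your proposal is therefore Lemma~\ref{standard_monomials}, which establishes $\dim_K([A_\ab]_2)=\dim_K([A_0]_2)$ by an explicit case analysis of the degree-$2$ monomials outside $\langle \ini_<(\Gc_\ab)\rangle$, organized by the cardinality of $\{\ell_1,\ell_2,m_1,m_2\}$. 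Your lifting route~(b) is a legitimate alternative, but as you acknowledge it would require exhibiting and checking a Pl\"ucker lift for each of the seven families (i)--(vii), which is comparable work and which you do not carry out.
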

Note that the case $s=2$ of Theorem~\ref{thm:toric_deg} was proved in \cite[Theorem 4.3]{CM}. 
Moreover, as described below, the proof of Theorem~\ref{thm:toric_deg} looks simpler than that of \cite[Theorem 4.3]{CM}. 

Let $\Lambda_\ab$ be an $s$-block diagonal matching field associated with $\ab = (a_1,\dots, a_s) \in \ZZ_{>0}^s$ such that $\sum_{i=1}^s a_i = n$. 
Let $A_\ab$ be the toric ring arising from $\Lambda_\ab$, i.e., $S/J_{\Lambda_\ab}$, 
and let $A_0$ denote that of the diagonal matching field, i.e., $A_0=A_{(n)}$. 
Let $[A]_2$ denote the $K$-vector subspace of $A$ generated by elements of degree 2 in $A$. 

Our proof of Theorem~\ref{thm:toric_deg} essentially consists of Theorem~\ref{thm:quad} and the following lemma.
Note that the condition ``$a_i \in \{1,2\}$ for $i \notin \{1,s\}$'' is not required in this lemma.

\begin{Lemma}\label{standard_monomials}
Let $\ab = (a_1,\dots, a_s) \in \ZZ_{>0}^s$ such that $\sum_{i=1}^s a_i = n$.
Then $\dim_K ( [A_\ab]_2) = \dim_K ([A_0]_2)$. 
\end{Lemma}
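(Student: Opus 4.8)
The plan is to count standard monomials of degree $2$ in both toric rings and show the counts are equal. Since $A_\ab = S/J_{\Lambda_\ab}$ and $A_0 = S/J_{\Lambda_{(n)}}$ are graded, and in each case $J_{\Lambda}$ is a toric ideal, $\dim_K([A_\ab]_2)$ equals the number of monomials $P_I P_J$ of degree $2$ in $S$ modulo the equivalence relation $P_I P_J \sim P_{I'} P_{J'}$ whenever $\pi_{\Lambda_\ab}(P_I P_J) = \pi_{\Lambda_\ab}(P_{I'} P_{J'})$; equivalently, it is the number of distinct monomials $\pi_{\Lambda_\ab}(P_I P_J)$ in $R$, i.e., the number of distinct degree-$2$ monomials in the image of the map sending $P_I P_J$ to $\xb_{\Lambda_\ab(I)} \xb_{\Lambda_\ab(J)}$. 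So the task reduces to: the number of degree-$2$ monomials in $R$ of the form $\xb_{\Lambda_\ab(I)}\xb_{\Lambda_\ab(J)}$ is independent of $\ab$ (in particular equal to the $s=1$ value).

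The key step is to build an explicit bijection between the degree-$2$ monomials arising from $\Lambda_\ab$ and those arising from $\Lambda_{(n)}$. The natural candidate is a ``sorting'' normal form: given $I = \{i_1 < \cdots < i_r\}$ and $J = \{j_1 < \cdots < j_r\}$, form the two multisets of row-column pairs coming from $\Lambda_\ab(I)$ and $\Lambda_\ab(J)$, and observe that the monomial $\xb_{\Lambda_\ab(I)}\xb_{\Lambda_\ab(J)}$ records, for each row index $p \in [r]$, the unordered pair of columns $\{c, c'\}$ used in row $p$. For the diagonal matching field, each degree-$2$ standard monomial corresponds (via the results recalled in Section~\ref{sec:qGB} and the Gelfand--Tsetlin picture) to a pair $(I', J')$ with $I' \subseteq J'$ after sorting componentwise, i.e., to the ``straightened'' product. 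I would show that the binomials of type (i)--(vii) in Lemma~\ref{binomialslemma}, applied in the degree-$2$ setting, let one rewrite any $P_I P_J$ into a canonical representative indexed by the same combinatorial data (a pair of chains $i_k' \le j_k'$ in each coordinate, with a controlled swap in the first two rows), and that this data is exactly what indexes the degree-$2$ standard monomials of $A_0$. Concretely: send the class of $P_I P_J$ in $[A_\ab]_2$ to the class of $P_{I''} P_{J''}$ in $[A_0]_2$ where $I'', J''$ are obtained by componentwise $\min$/$\max$ (the operation already appearing throughout Lemma~\ref{binomialslemma}); check this is well defined and invertible.

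The main obstacle I expect is the first two rows, where $\Lambda_\ab$ can introduce a transposition $(1\,2)$ that the diagonal field never does: one must check that accounting for whether $w_{i_1} \lessgtr w_{i_2}$ does not change the \emph{count} of reachable monomials, only which representative is standard. This is where the bipartite-graph description of $J_{\Lambda_M}$ for $2 \times n$ matching fields (Section~\ref{sec:graph}, Propositions~\ref{HibiRing} and \ref{revlexbipartite}) should do the heavy lifting: the degree-$2$ part of $A_\ab$ in the first two rows is governed by $I_{G_\ab}$, and $G_\ab \cong G_{M_{\mathrm{diag}}}$ as graphs by \cite[Proposition 1.11]{SZ}, so the relevant vector-space dimensions already agree there. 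The remaining rows $k \ge 3$ behave identically for every $\ab$ since $\Lambda_\ab$ acts as the identity on them. Thus I would (a) factor the degree-$2$ count as a product/fibered count over ``first-two-rows data'' and ``rows $\ge 3$ data'', (b) use the graph isomorphism for the first factor and the trivial identification for the second, and (c) conclude $\dim_K([A_\ab]_2) = \dim_K([A_0]_2)$.
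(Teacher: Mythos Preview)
Your reformulation---count degree-$2$ monomials in the image of $\pi_{\Lambda_\ab}$ and show the count is independent of $\ab$---is correct and is close in spirit to what the paper does. However, the execution you sketch has a genuine gap at the ``factoring'' step.

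The problem is that the first-two-rows data and the rows-$\ge 3$ data are \emph{not} decoupled: for $P_I P_J$ to make sense you need $i_2 < i_3$ and $j_2 < j_3$, so the constraints on the tails depend on the actual column indices $i_1,i_2,j_1,j_2$ appearing in rows $1$ and $2$. The graph isomorphism $G_\ab \cong G_{M_{\mathrm{diag}}}$ from \cite[Proposition~1.11]{SZ} exists, but it \emph{relabels the vertices} (it is induced by the permutation sorting $w_1,\dots,w_n$). Consequently it does not carry the condition ``both endpoints of this edge are $<i_3$'' to the analogous condition on the diagonal side, and hence it does not transport the fibered count. Put differently: the abstract graph isomorphism tells you that $\dim_K[K[G_\ab]]_2=\dim_K[K[G_{M_{\mathrm{diag}}}]]_2$, but this is a statement about the $2\times n$ matching field in isolation, not about how it interacts with the fixed rows $3,\dots,r$. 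Your earlier suggestion (take componentwise $\min/\max$ to produce a map $[A_\ab]_2\to[A_0]_2$) runs into the same wall: you have not checked that this map is well defined on equivalence classes, and it is precisely in the first two rows that well-definedness is delicate.

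What the paper actually does is closer to your first idea but carried out explicitly. It fixes the set $\{\ell_1,\ell_2,m_1,m_2\}$ of entries in the first two rows and, for each of the possible relative orderings of the $w$-values on that set, writes down exactly which top patterns are ``standard'' (i.e.\ not killed by the initial terms of the binomials in Lemma~\ref{binomialslemma}) and what inequality they impose on $\ell_3,m_3$. The point of the case tables is that in every case those imposed inequalities are the \emph{same} (e.g.\ ``$\mu_2<\ell_3$, $\mu_4<m_3$, $\ell_3\le m_3$'' in Case~3), so the number of tails one can attach is independent of the $w$-pattern, hence of $\ab$. A separate short argument then shows that these standard monomials are linearly independent. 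This is exactly the ``fiber sizes match'' verification that your plan skips; without it (or an equivalent argument), the proof is incomplete.
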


\begin{proof}
Let ${\mathcal G}_\ab$ be the set of all binomials appearing in Lemma {\rm \ref{binomialslemma}} {\rm (i) -- (vii)}
and let $<$ be the reverse lexicographic order defined by (\ref{revlex_rule}).
Since we do not assume $a_i \in \{1,2\}$ for $i \notin \{1,s\}$, 
${\mathcal G}_\ab$ is not necessarily a Gr\"{o}bner basis with respect to $<$.
Let $\langle \ini_< (\Gc_\ab) \rangle$ be the monomial ideal generated by the initial monomials of the binomials in ${\mathcal G}_\ab$. 
Then $\langle \ini_< (\Gc_\ab) \rangle$ is a subideal of the initial ideal 
$\ini_<( J_{\Lambda_\ab} )$ of $J_{\Lambda_\ab}$.
Let $\Mc$ be the set of all quadratic monomials in $S \setminus \langle \ini_< (\Gc_\ab) \rangle$.
It is known that the set of all quadratic monomials in $S \setminus \ini_<( J_{\Lambda_\ab} )$ is a basis of $ [A_\ab]_2$ 
(see, e.g., \cite[Proposition 1.1]{Stu}). Since $ \langle \ini_< (\Gc_\ab) \rangle \subset \ini_<( J_{\Lambda_\ab} )$, 
it follows that $\Mc$ spans $ [A_\ab]_2$. Thus it is enough to show that
\begin{itemize}
\item[(i)] the cardinality of $\Mc$ is independent of $\ab$; 
\item[(ii)] $\Mc$ is linearly independent over $K$. 
\end{itemize}

First, we prove (i). Let
\begin{equation}
\label{std}
u=
\begin{bmatrix}
\ell_1\\
\ell_2\\
\ell_3\\
\vdots\\
\ell_r
\end{bmatrix}
\begin{bmatrix}
m_1\\
m_2\\
m_3\\
\vdots\\
m_r
\end{bmatrix}
\in \Mc. \tag{$*$}
\end{equation}

\smallskip

\noindent
{\bf Case 1}: $|\{ \ell_1, \ell_2, m_1, m_2\}|=2$. 

Then $\ell_1 = m_1$ and $\ell_2 = m_2$. Given positive integers $1 \le \mu < \nu \le n$, 
the number of monomials $u \in \Mc$ of the form in \eqref{std} such that $\{\ell_1, \ell_2\} = \{\mu, \nu\}$ is equal to 
\[|\{(\ell_3,\dots,\ell_r,m_3,\dots,m_r) : \ell_k < \ell_{k+1}, m_k < m_{k+1}, m_k \le \ell_k \mbox{ for } k \ge 3, \mbox{ and }\nu < \ell_3\}|\]
since such a monomial is of the form \eqref{std},
where $\ell_k < \ell_{k+1}, m_k < m_{k+1}, m_k \le \ell_k$ for $k \ge 3$, and $\nu < \ell_3$.
The number of such monomials is independent of $\ab$.

\smallskip

\noindent
{\bf Case 2}: $|\{ \ell_1, \ell_2, m_1, m_2\}|=3$. 

We show that, given distinct positive integers $1 \le \lambda, \mu_1, \mu_2 \le n$, the number of monomials $u \in \Mc$ of the form \eqref{std}
such that $\{ \ell_1, \ell_2, m_1, m_2\} = \{\lambda, \mu_1,\mu_2\}$ and $\{ \ell_1, \ell_2\} \cap \{m_1, m_2\} = \{\lambda\}$ 
are independent of $\ab$.
If $\lambda < \mu_1< \mu_2$, then the possible patterns are listed in Table~\ref{Case 2-1}, 
where $\mu_1 < \ell_3$, $\mu_2 < m_3$ and $\ell_3 \le m_3$. 
\begin{table}[h]
  \begin{tabular}{|c|c|c|c|}
\hline
$w_\lambda > w_{\mu_1} >  w_{\mu_2}$ & $w_\lambda > w_{\mu_1} < w_{\mu_2}$& $w_\lambda < w_{\mu_1} > w_{\mu_2}$  & $w_\lambda < w_{\mu_1} < w_{\mu_2}$\\
\hline
$\begin{bmatrix}
\lambda\\
\mu_1\\
\ell_3\\
\vdots
\end{bmatrix}
\begin{bmatrix}
\lambda\\
\mu_2\\
m_3\\
\vdots
\end{bmatrix}$
&
$\begin{bmatrix}
\lambda\\
\mu_1\\
\ell_3\\
\vdots
\end{bmatrix}
\begin{bmatrix}
\mu_2\\
\lambda\\
m_3\\
\vdots
\end{bmatrix}$
&
$\begin{bmatrix}
\mu_1\\
\lambda\\
\ell_3\\
\vdots
\end{bmatrix}
\begin{bmatrix}
\mu_2\\
\lambda\\
m_3\\
\vdots
\end{bmatrix}$
&
$\begin{bmatrix}
\mu_1\\
\lambda\\
\ell_3\\
\vdots
\end{bmatrix}
\begin{bmatrix}
\mu_2\\
\lambda\\
m_3\\
\vdots
\end{bmatrix}$\\
\hline
  \end{tabular}
\caption{Case 2-1}
\label{Case 2-1}
\end{table}
If $ \mu_1 <\lambda < \mu_2$, then the possible patterns are listed in Table~\ref{Case 2-2}, 
where $\lambda < \ell_3$, $\mu_2 < m_3$ and $\ell_3 \le m_3$.
\begin{table}[h]
  \begin{tabular}{|c|c|c|c|}
\hline
$ w_{\mu_1} > w_\lambda > w_{\mu_2}$ & $w_{\mu_1} > w_\lambda < w_{\mu_2}$& $w_{\mu_1} < w_\lambda > w_{\mu_2}$  & $w_{\mu_1} < w_\lambda < w_{\mu_2}$\\
\hline
$\begin{bmatrix}
\mu_1\\
\lambda\\
\ell_3\\
\vdots
\end{bmatrix}
\begin{bmatrix}
\lambda\\
\mu_2\\
m_3\\
\vdots
\end{bmatrix}$
&
$\begin{bmatrix}
\mu_1\\
\lambda\\
\ell_3\\
\vdots
\end{bmatrix}
\begin{bmatrix}
\mu_2\\
\lambda\\
m_3\\
\vdots
\end{bmatrix}$
&
$\begin{bmatrix}
\lambda\\
\mu_1\\
\ell_3\\
\vdots
\end{bmatrix}
\begin{bmatrix}
\lambda\\
\mu_2\\
m_3\\
\vdots
\end{bmatrix}$
&
$\begin{bmatrix}
\lambda\\
\mu_1\\
\ell_3\\
\vdots
\end{bmatrix}
\begin{bmatrix}
\mu_2\\
\lambda\\
m_3\\
\vdots
\end{bmatrix}$\\
\hline
  \end{tabular}
\caption{Case 2-2}
\label{Case 2-2}
\end{table}
If $ \mu_1 < \mu_2 <\lambda $, then the possible patterns are listed in Table~\ref{Case 2-3}, where $\lambda < \ell_3 \le m_3$.
\begin{table}[h]
  \begin{tabular}{|c|c|c|c|}
\hline
$ w_{\mu_1} > w_{\mu_2}  > w_\lambda$ & $w_{\mu_1} > w_{\mu_2} < w_\lambda$& $w_{\mu_1} < w_{\mu_2} > w_\lambda$  & $w_{\mu_1} < w_{\mu_2} < w_\lambda$\\
\hline
$\begin{bmatrix}
\mu_1\\
\lambda\\
\ell_3\\
\vdots
\end{bmatrix}
\begin{bmatrix}
\mu_2\\
\lambda\\
m_3\\
\vdots
\end{bmatrix}$
&
$\begin{bmatrix}
\lambda\\
\mu_1\\
\ell_3\\
\vdots
\end{bmatrix}
\begin{bmatrix}
\lambda\\
\mu_2\\
m_3\\
\vdots
\end{bmatrix}$
&
$\begin{bmatrix}
\mu_2\\
\lambda\\
\ell_3\\
\vdots
\end{bmatrix}
\begin{bmatrix}
\lambda\\
\mu_1\\
m_3\\
\vdots
\end{bmatrix}$
&
$\begin{bmatrix}
\lambda\\
\mu_1\\
\ell_3\\
\vdots
\end{bmatrix}
\begin{bmatrix}
\lambda\\
\mu_2\\
m_3\\
\vdots
\end{bmatrix}$\\
\hline
  \end{tabular}
\caption{Case 2-3}
\label{Case 2-3}
\end{table}
Thus the number of such monomials $u \in \Mc$ is independent of $\ab$ in this case.

\smallskip

\noindent
{\bf Case 3}: $|\{ \ell_1, \ell_2, m_1, m_2\}|=4$. 

We show that, given positive integers $1 \le \mu_1 <  \mu_2 <  \mu_3 <  \mu_4 \le n$, 
the number of monomials $u \in \Mc$ of the form \eqref{std} such that 
$\{ \ell_1, \ell_2, m_1, m_2\} = \{\mu_1, \mu_2,  \mu_3,  \mu_4\}$ is independent of $\ab$. 
Then the possible patterns are listed in Table~\ref{Case 3}, 
where $\mu_2 < \ell_3$, $\mu_4 < m_3$, $\ell_3 \le m_3$, $\mu_3 < \ell_3'$, $\mu_4 < m_3'$ and $\ell_3' \le m_3'$. 
\begin{table}[h]
  \begin{tabular}{|c|c|c|}
\hline
   $w_{\mu_1} > w_{\mu_2} > w_{\mu_3}>w_{\mu_4}$ 
& $w_{\mu_1} > w_{\mu_2} > w_{\mu_3} <w_{\mu_4}$ 
& $w_{\mu_1} < w_{\mu_2} > w_{\mu_3}>w_{\mu_4}$ \\
\hline
$\begin{bmatrix}
\mu_1\\
\mu_2\\
\ell_3\\
\vdots
\end{bmatrix}
\begin{bmatrix}
\mu_3\\
\mu_4\\
m_3\\
\vdots
\end{bmatrix}$,
$\begin{bmatrix}
\mu_1\\
\mu_3\\
\ell_3'\\
\vdots
\end{bmatrix}
\begin{bmatrix}
\mu_2\\
\mu_4\\
m_3'\\
\vdots
\end{bmatrix}$
&
$\begin{bmatrix}
\mu_1\\
\mu_2\\
\ell_3\\
\vdots
\end{bmatrix}
\begin{bmatrix}
\mu_4\\
\mu_3\\
m_3\\
\vdots
\end{bmatrix}$,
$\begin{bmatrix}
\mu_2\\
\mu_3\\
\ell_3'\\
\vdots
\end{bmatrix}
\begin{bmatrix}
\mu_4\\
\mu_1\\
m_3'\\
\vdots
\end{bmatrix}$
&
$\begin{bmatrix}
\mu_2\\
\mu_1\\
\ell_3\\
\vdots
\end{bmatrix}
\begin{bmatrix}
\mu_3\\
\mu_4\\
m_3\\
\vdots
\end{bmatrix}$,
$\begin{bmatrix}
\mu_2\\
\mu_3\\
\ell_3'\\
\vdots
\end{bmatrix}
\begin{bmatrix}
\mu_4\\
\mu_1\\
m_3'\\
\vdots
\end{bmatrix}$\\
\hline
  \end{tabular}

\smallskip
  
\begin{tabular}{|c|c|c|}
\hline
   $w_{\mu_1} > w_{\mu_2} < w_{\mu_3}>w_{\mu_4}$ 
& $w_{\mu_1} > w_{\mu_2} < w_{\mu_3}<w_{\mu_4}$ 
& $w_{\mu_1} < w_{\mu_2} > w_{\mu_3}<w_{\mu_4}$ \\
\hline
$\begin{bmatrix}
\mu_1\\
\mu_2\\
\ell_3\\
\vdots
\end{bmatrix}
\begin{bmatrix}
\mu_3\\
\mu_4\\
m_3\\
\vdots
\end{bmatrix}$,
$\begin{bmatrix}
\mu_3\\
\mu_1\\
\ell_3'\\
\vdots
\end{bmatrix}
\begin{bmatrix}
\mu_4\\
\mu_2\\
m_3'\\
\vdots
\end{bmatrix}$
&
$\begin{bmatrix}
\mu_1\\
\mu_2\\
\ell_3\\
\vdots
\end{bmatrix}
\begin{bmatrix}
\mu_4\\
\mu_3\\
m_3\\
\vdots
\end{bmatrix}$,
$\begin{bmatrix}
\mu_3\\
\mu_1\\
\ell_3'\\
\vdots
\end{bmatrix}
\begin{bmatrix}
\mu_4\\
\mu_2\\
m_3'\\
\vdots
\end{bmatrix}$
&
$\begin{bmatrix}
\mu_2\\
\mu_1\\
\ell_3\\
\vdots
\end{bmatrix}
\begin{bmatrix}
\mu_4\\
\mu_3\\
m_3\\
\vdots
\end{bmatrix}$,
$\begin{bmatrix}
\mu_3\\
\mu_1\\
\ell_3'\\
\vdots
\end{bmatrix}
\begin{bmatrix}
\mu_4\\
\mu_2\\
m_3'\\
\vdots
\end{bmatrix}$\\
\hline
  \end{tabular}

\smallskip
  
  \begin{tabular}{|c|c|}
\hline
   $w_{\mu_1} < w_{\mu_2} < w_{\mu_3}>w_{\mu_4}$ 
& $w_{\mu_1} < w_{\mu_2} < w_{\mu_3}<w_{\mu_4}$ \\
\hline
$\begin{bmatrix}
\mu_2\\
\mu_1\\
\ell_3\\
\vdots
\end{bmatrix}
\begin{bmatrix}
\mu_3\\
\mu_4\\
m_3\\
\vdots
\end{bmatrix}$,
$\begin{bmatrix}
\mu_3\\
\mu_1\\
\ell_3'\\
\vdots
\end{bmatrix}
\begin{bmatrix}
\mu_4\\
\mu_2\\
m_3'\\
\vdots
\end{bmatrix}$
&
$\begin{bmatrix}
\mu_2\\
\mu_1\\
\ell_3\\
\vdots
\end{bmatrix}
\begin{bmatrix}
\mu_4\\
\mu_3\\
m_3\\
\vdots
\end{bmatrix}$,
$\begin{bmatrix}
\mu_3\\
\mu_1\\
\ell_3'\\
\vdots
\end{bmatrix}
\begin{bmatrix}
\mu_4\\
\mu_2\\
m_3'\\
\vdots
\end{bmatrix}$\\
\hline
  \end{tabular}
\caption{Case 3}
\label{Case 3}
\end{table}
Thus the number of such monomials $u \in \Mc$ is independent of $\ab$ in this case.

\smallskip

Next, we prove (ii). On the contrary, suppose that $\Mc$ is linearly dependent over $K$. 
Since the set of all quadratic monomials in $S \setminus \ini_<( J_{\Lambda_\ab} )$ is a basis of $ [A_\ab]_2$, 
there exists $u \in \Mc$ such that $u$ belongs to $\ini_<( J_{\Lambda_\ab})$. 
Then there exists a binomial $u -v \in {\mathcal G}_\ab$ such that $\ini_<(u-v)=u$ and $v \notin \ini_<( J_{\Lambda_\ab})$. 
Since $v \in (S \setminus \ini_<( J_{\Lambda_\ab}))  \subset (S \setminus \langle \ini_< (\Gc_\ab) \rangle)$, we have $v \in \Mc$. Let
\[u=
\begin{bmatrix}
\ell_1\\
\ell_2\\
\ell_3\\
\vdots\\
\ell_r
\end{bmatrix}
\begin{bmatrix}
m_1\\
m_2\\
m_3\\
\vdots\\
m_r
\end{bmatrix} \ \text{ and }\ 
v=
\begin{bmatrix}
\ell_1'\\
\ell_2'\\
\ell_3'\\
\vdots\\
\ell_r'
\end{bmatrix}
\begin{bmatrix}
m_1'\\
m_2'\\
m_3'\\
\vdots\\
m_r'
\end{bmatrix}.\]
Since $u$ and $v$ belong to $\Mc$, we may assume that $\ell_k \le m_k$ and $\ell_k' \le m_k'$ for each $k= 1,3,\ldots, r$.
Moreover since $u -v $ belongs to $J_{\Lambda_\ab}$, we have $\{\ell_k, m_k\} = \{\ell_k', m_k'\}$ for each $k= 1,2,\ldots, r$.
Thus $(\ell_k, m_k) = (\ell_k', m_k')$ for each $k= 1,3,\ldots, r$, and $(\ell_2, m_2) = (m_2', \ell_2')$ with $\ell_2 \neq m_2$. 
Since $\{\ell_1, \ell_2 , m_1, m_2\} = \{\ell_1', \ell_2' , m_1', m_2'\}$ holds, $u$ and $v$ must appear in the same case above.
However there is no such a pair of monomials in any case above, a contradiction. 
Thus $\Mc$ is linearly independent, and hence a basis of $[A_\ab]_2 $.
\end{proof}

\begin{proof}[Proof of Theorem~\ref{thm:toric_deg}]
Our goal is to show that $J_{\Lambda_\ab}=\ini_{w_{M_\ab}}(I_{r,n})$ by Theorem~\ref{thm:enough}. 

The proof is based on the same idea as that of \cite[Theorem 4.3]{CM}. 
By the way of the proof employed there, we see that we may prove the following: 
\begin{itemize}
\item $J_{\Lambda_\ab}$ is quadratically generated; 
\item $\dim([A_\ab]_2)=\dim([A_0]_2)$. 
\end{itemize}
Both assertions are the consequences of Theorem~\ref{thm:quad} and Lemma~\ref{standard_monomials}, respectively. 
\end{proof}
\begin{Remark}
Let us explain how our proof of $\dim([A_\ab]_2)=\dim([A_0]_2)$ becomes simpler than that of \cite[Theorem 4.3]{CM}. 
The proof of $\dim([A_\ab]_2)=\dim([A_0]_2)$ in \cite{CM} consists of three lemmas, \cite[Lemma 3.10--3.12]{CM}. 
Roughly speaking, the idea of the proof is 
to find a basis for $[A_\ab]_2$ and a bijection between a basis for $[A_\ab]_2$ and the well-known basis for $[A_0]_2$. 
For this, we need much more detailed case-by-case discussions than ours. 
On the other hand, we use the special property of initial ideals (\cite[Proposition 1.1]{Stu}). 

On the other hand, \cite[Theorem 4.1]{CM} only claims the quadratic generation of $J_{\Lambda_\ab}$, 
while Theorem~\ref{thm:quad} claims the stronger property. 
\end{Remark}

\begin{Corollary}\label{cor:toric_deg}
Each $s$-block diagonal matching field associated with $\ab=(a_1,\ldots,a_s) \in \ZZ_{>0}^s$ such that 
$\sum_{i=1}^s a_i=n$ and $a_i \in \{1,2\}$ for $i \not\in \{1,s\}$ gives rise to a toric degeneration of $\Gr(r,n)$. 
\end{Corollary}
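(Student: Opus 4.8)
The plan is to deduce Corollary~\ref{cor:toric_deg} directly from Theorem~\ref{thm:toric_deg} by invoking the standard dictionary between SAGBI bases and flat toric degenerations. First I would recall that once we know $\{\det(\xb_I) : I \in \Ib_{r,n}\}$ is a SAGBI basis of $\Ac_{r,n}$ with respect to the weight matrix $M_\ab$, the associated weight vector $w_{M_\ab}$ on $S$ induces a Rees-algebra construction whose generic fiber is $\Ac_{r,n}$ (equivalently $S/I_{r,n}$, the Pl\"ucker embedding of $\Gr(r,n)$) and whose special fiber is $S/\ini_{w_{M_\ab}}(I_{r,n})$. By Theorem~\ref{thm:enough} the SAGBI condition is exactly the equality $\ini_{w_{M_\ab}}(I_{r,n}) = J_{\Lambda_\ab}$, so the special fiber is $S/J_{\Lambda_\ab}$, which is the toric ring $A_\ab$ of the matching field $\Lambda_\ab$. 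Hence the special fiber is the toric variety defined by the matching field, and by the general theory of Gr\"obner/weight degenerations this family is flat over $\mathbb{A}^1$.

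The key steps, in order, are: (1) cite Theorem~\ref{thm:toric_deg} to get that the Pl\"ucker generators form a SAGBI basis with respect to $M_\ab$; (2) apply Theorem~\ref{thm:enough} to translate this into $\ini_{w_{M_\ab}}(I_{r,n}) = J_{\Lambda_\ab}$; (3) form the one-parameter family over $\mathbb{A}^1 = \operatorname{Spec} K[t]$ whose fiber over $t \neq 0$ is $\operatorname{Proj}(S/I_{r,n}) = \Gr(r,n)$ and whose fiber over $t = 0$ is $\operatorname{Proj}(S/J_{\Lambda_\ab})$; (4) note that $J_{\Lambda_\ab}$ is a toric (prime binomial) ideal, so the central fiber is a toric variety, and that the family is flat because passing to an initial ideal with respect to a weight vector preserves the Hilbert function (see, e.g., \cite[Chapter~15]{binomialideals} or \cite[Section~11]{Stu}). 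This is precisely the definition of a toric degeneration of $\Gr(r,n)$.

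I do not expect any real obstacle here: the corollary is a formal consequence of the already-proved Theorem~\ref{thm:toric_deg} together with the standard SAGBI-deformation machinery, which is exactly the content of \cite[Section~11]{Stu} and is the route used in \cite{CM, MS} for the $s=2$ case. The only point that deserves a sentence of care is flatness of the family, which follows because the initial degeneration with respect to a weight vector is a flat limit (the Hilbert series is unchanged), together with the fact that $\Gr(r,n)$ is the generic fiber; and that the central fiber is genuinely toric, which is immediate since $J_{\Lambda_\ab}$ is the toric ideal of the matching field polytope of $\Lambda_\ab$ as noted in Section~\ref{sec:prepare}. So the proof is essentially one paragraph assembling these citations.

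\begin{proof}
By Theorem~\ref{thm:toric_deg}, the generating set $\{\det(\xb_I) : I \in \Ib_{r,n}\}$ of $\Ac_{r,n}$ is a SAGBI basis with respect to the weight matrix $M_\ab$. By Theorem~\ref{thm:enough}, this is equivalent to $\ini_{w_{M_\ab}}(I_{r,n}) = J_{\Lambda_\ab}$, where $w_{M_\ab}$ is the weight vector on $S$ induced by $M_\ab$. Consider the flat family over $\mathbb{A}^1$ obtained from the weight degeneration of $I_{r,n}$ with respect to $w_{M_\ab}$: its generic fiber is $\operatorname{Proj}(S/I_{r,n})$, which is $\Gr(r,n)$ under the Pl\"ucker embedding, and its special fiber is $\operatorname{Proj}(S/\ini_{w_{M_\ab}}(I_{r,n})) = \operatorname{Proj}(S/J_{\Lambda_\ab})$. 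Since $J_{\Lambda_\ab}$ is the toric ideal of the matching field polytope of $\Lambda_\ab$ (see Section~\ref{sec:prepare}), the special fiber is a toric variety, and the family is flat because passing to an initial ideal with respect to a weight vector preserves the Hilbert function (see \cite[Section~11]{Stu}, \cite{binomialideals}). Hence $\Lambda_\ab$ gives rise to a toric degeneration of $\Gr(r,n)$.
\end{proof}
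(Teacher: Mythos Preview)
Your proposal is correct and matches the paper's approach: the paper states Corollary~\ref{cor:toric_deg} without proof, treating it as an immediate consequence of Theorem~\ref{thm:toric_deg} via the standard SAGBI-to-toric-degeneration dictionary (as already signalled by the word ``Consequently'' in Theorem~\ref{thm:main} and the references to \cite[Section~11]{Stu} and \cite{CM, MS}). You have simply spelled out that implicit one-line argument in full.
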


We conclude the present paper with the following:
\begin{Question}[{cf. \cite[Remark 3.14]{MS}}]
Does every $s$-block diagonal matching field give rise to a toric degeneration of $\Gr(r,n)$? 
\end{Question}


\end{document}